\newtheorem{theorem}{Theorem}[section]
\newtheorem{lemma}[theorem]{Lemma}
\newtheorem{conjecture}[theorem]{Conjecture}
\theoremstyle{definition}
\newtheorem{remark}[theorem]{Remark}
\newtheorem{example}[theorem]{Example}
\definecolor{darkblue}{rgb}{0,0,0.6}
\title[Warnaar's bijection and colored partition identities, I]{Warnaar's bijection and colored partition identities, I}
\author{Colin Sandon}
\address{Department of Mathematics\\ MIT\\ Cambridge, MA 02139-4307}
\email{csandon@mit.edu}
\author{Fabrizio Zanello} \address{Department of Mathematics\\ MIT\\ Cambridge, MA 02139-4307\\{\tiny and}}
\address{Department of Mathematical  Sciences\\ Michigan Tech\\ Houghton, MI  49931-1295}
\email{zanello@math.mit.edu}
\thanks{2010 {\em Mathematics Subject Classification.} Primary: 05A17; Secondary: 05A19, 11P83, 05A15.\\
{\em Key words and phrases.} Partition identity; Colored partition; Farkas-Kra identity; Identity of the Schr\"oter, Russell and Ramanujan type; Modular equation; Bijective proof; Warnaar's bijection; Euler's Pentagonal Number Theorem}
\begin{document}
\begin{abstract}
We provide a general and unified combinatorial framework for a  number of colored partition identities, which include the five,  recently proved analytically by B. Berndt, that correspond to the exceptional modular equations of prime degree due to H. Schr\"oter, R. Russell and S. Ramanujan. Our approach generalizes that of S. Kim, who has given a bijective proof for two of these five identities, namely the ones modulo 7 (also known as the Farkas-Kra identity) and modulo 3. As a consequence of our method, we determine bijective proofs also for the two highly nontrivial identities modulo 5 and 11, thus leaving open combinatorially only the one modulo 23.
\end{abstract}

\maketitle

\section{Introduction}

Colored partition identities are a very active   research area within the theory of integer partitions. In particular, they provide natural combinatorial interpretations for certain classes of objects coming from other mathematical fields, including  equations that involve modular forms or theta functions. The simplest and perhaps best known identity of this family is the so-called ``Farkas-Kra identity modulo 7'' (see \cite{FK}), which states that there are as many integer partitions of $2N+1$ into distinct odd parts as there are integer partitions of $2N$ into distinct even parts, provided the multiples of 7 appear in two different copies. A combinatorial proof of this result had been asked for by H.M. Farkas and I. Kra, R. Stanley, B. Berndt and a number of other authors, and was recently given by S. Kim \cite{Ki}.

The Farkas-Kra identity is part of a set of five exceptional colored partition identities, sometimes referred to as ``identities of the Schr\"oter, Russell and Ramanujan type'', which correspond to five,  conjecturally unique,  modular equations of prime degree, discovered independently  by H. Schr\"oter \cite{Sc}, R. Russell \cite{Ru1,Ru2} and S. Ramanujan \cite{BR,Ra}. These modular equations, respectively of degree 3, 5, 7, 11 and 23, as Berndt pointed out in \cite{Be},  appear to be the only ones of such a simple type. See \cite{Be} for an interesting and detailed discussion of the history of these equations. In fact, in  his paper, Berndt determined and proved analytically the five corresponding  partition identities. As Berndt remarked, however (see also M.D. Hirschhorn \cite{Hi}), these five identities remained ``manifestly mysterious'', as they still lacked ``simple  bijective proofs'', which ``would be of enormous interest''. 

Soon afterwards, S. Kim \cite{Ki}, who employed in a clever fashion a bijection of S.O. Warnaar \cite{Wa} and generalized one of his results, provided an entirely bijective proof of, among other facts, two of the above identities  --- the one modulo 7, as we have said, and that modulo 3. 

A main goal of this paper is to respond to Berndt's call for a unified combinatorial framework in which to look at the five identities of the Schr\"oter, Russell and Ramanujan type. In fact, extending Kim's idea, we prove  an equivalence between a very broad family of colored partition identities, which include the above five, and suitable equations in  $(\nu_1,\dots,\nu_{t};d_1,\dots,d_{t})$, where $t\geq 1$, the $\nu_i$ are partitions, and the $d_i$ are integers whose sum is odd.

In particular, our approach  allows us to prove bijectively two more of the identities of the Schr\"oter, Russell and Ramanujan type, namely those corresponding to the modular equations of degree 5 and 11, whose specific proofs turn out to be highly nontrivial. Unfortunately, we have not been able to show bijectively  the last identity, that modulo 23. We state its equivalent equation as Conjecture \ref{7.2}.

In a sequel to this paper \cite{CSFZ2}, we will prove, again as a consequence of our method, a number of new (and  challenging) colored partition identities.

\section{The master bijection}

Let us first briefly recall the main definitions from partition  theory that we are going to use in this paper. For an introduction,  a survey of the  main  techniques, or a discussion of the philosophy behind  this fascinating field, see e.g. \cite{And,AE,Pak}, Section I.1 of \cite{Ma}, and Section 1.8 of \cite{St0}. 

Given a nonnegative integer $N$, we say that the nonincreasing sequence $\lambda=(\lambda^{(1)},\dots, \lambda^{(s)})$ of nonnegative integers is a \emph{partition} of $N$, and often write $|\lambda|=N$, if $\sum_{i=1}^s\lambda^{(i)}=N$. The $\lambda^{(i)}$  are called the \emph{parts} of $\lambda $, and the number of parts of $\lambda $ is  its \emph{length}, denoted by $l(\lambda)$. As usual, we define $p(N)$ to be the number of partitions of $N$ into positive parts; thus $p(a)=0$ for $a<0$, and $p(0)=1$, since we adopt  the standard convention that $\emptyset $ is the only partition of $N=0$.

Finally, let $P$ be the set of all partitions into positive parts,  $D_0$  the set of partitions into distinct nonnegative parts, and  $D=P\cap D_0$ the set of partitions into distinct positive parts. For instance, $\lambda=(6,6,3)\in P$ has length $l(\lambda )=3$, and $\lambda=(7,6,3,0)\in D_0$ has length $l(\lambda )=4$.

We begin with the following crucial bijection due to S.O. Warnaar \cite{Wa}, who generalized an earlier bijection of E.M. Wright \cite{Wr}. As usual, we set $\binom{d}{2}=d(d-1)/2$, for any $d\in \mathbb Z$.

\begin{lemma}[\cite{Wa}]\label{warnaar}
There exists a bijection between the set of triples $(\alpha, \beta, d)$, where $\alpha \in D_0$, $\beta \in D$ and $d=l(\alpha )- l(\beta )$, and the set of pairs $(\nu, d)$, where $\nu \in P$ and $d\in \mathbb Z$, such that
$$|\alpha| + |\beta| = |\nu|+\binom{d}{2}.$$
\end{lemma}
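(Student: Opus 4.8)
The plan is to construct the bijection in two halves, according to the sign of $d$, by first ``balancing'' the two partitions $\alpha$ and $\beta$ against each other and then reading off a single partition $\nu$. Start with a triple $(\alpha,\beta,d)$, where $\alpha=(\alpha^{(1)}>\dots>\alpha^{(l(\alpha))}\ge 0)\in D_0$ and $\beta=(\beta^{(1)}>\dots>\beta^{(l(\beta))}>0)\in D$, and $d=l(\alpha)-l(\beta)$. The key observation is that a strictly decreasing sequence of $k$ nonnegative integers is the same datum as a staircase $(k-1,k-2,\dots,1,0)$ plus an ordinary partition with at most $k$ parts; subtracting the staircase from $\alpha$ and from $\beta$ turns $\alpha$ into a partition $\alpha'\in P$ with $l(\alpha')\le l(\alpha)$ and $\beta$ into a partition $\beta'\in P$ with $l(\beta')\le l(\beta)$, while $|\alpha|=|\alpha'|+\binom{l(\alpha)}{2}$ and $|\beta|=|\beta'|+\binom{l(\beta)}{2}$.

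Next I would merge $\alpha'$ and $\beta'$ into one partition. The cleanest device is conjugation: if $d=l(\alpha)-l(\beta)\ge 0$, conjugate $\beta'$ to get a partition with at most $l(\beta)$ parts all of size $\le$ something, and stack it underneath (or interleave its columns with) $\alpha'$ in such a way that the combined object is again a partition $\nu\in P$ with $|\nu|=|\alpha'|+|\beta'|$; for $d<0$ one does the symmetric thing, conjugating $\alpha'$ instead. Then
$$|\alpha|+|\beta|=|\alpha'|+|\beta'|+\binom{l(\alpha)}{2}+\binom{l(\beta)}{2}=|\nu|+\binom{l(\alpha)}{2}+\binom{l(\beta)}{2},$$
so one must choose the merging so that the bookkeeping collapses $\binom{l(\alpha)}{2}+\binom{l(\beta)}{2}$ into exactly $\binom{d}{2}$ after absorbing the difference into $\nu$. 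Concretely, since $\binom{d}{2}=\binom{l(\alpha)-l(\beta)}{2}$, the quantity $\binom{l(\alpha)}{2}+\binom{l(\beta)}{2}-\binom{d}{2}=l(\beta)\bigl(l(\alpha)-l(\beta)\bigr)+\binom{l(\beta)}{2}+\binom{l(\beta)}{2}$ — a nonnegative rectangle-plus-triangle shape — is precisely the extra region that should be glued to $\alpha'\cup\beta'$ to form $\nu$; Warnaar's bijection is exactly the rule that performs this gluing and, crucially, is reversible because from $\nu$ and $d$ one can peel that rectangle-plus-triangle back off, recover $l(\alpha)$ and $l(\beta)$ from $d$ together with the width/height of the recovered shape, then re-add the staircases to reconstruct $\alpha$ and $\beta$.

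The main obstacle is getting the gluing rule to be simultaneously well-defined (the result is always a genuine partition, i.e. the rows stay nonincreasing at the seams), measure-preserving (the $\binom{d}{2}$ term comes out exactly, with no off-by-one error in the staircase normalization — note $\alpha\in D_0$ allows a part equal to $0$, which is what makes the counts for $D_0$ versus $D$ differ by the right amount), and injective/surjective. I would verify well-definedness and the area identity by a direct diagram computation in the two cases $d\ge 0$ and $d<0$, and then exhibit the inverse map explicitly rather than invoking an abstract counting argument, since the whole point of this lemma is that the correspondence is constructive; checking that the two composites are the identity is then a routine but essential diagram chase. A sanity check on small cases ($N=|\alpha|+|\beta|$ small, all values of $d$) would confirm the normalization before committing to the general argument.
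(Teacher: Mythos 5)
Your bookkeeping is correct and points in the right direction: subtracting staircases gives $|\alpha|+|\beta|=|\alpha'|+|\beta'|+\binom{a}{2}+\binom{b}{2}$ with $a=l(\alpha)$, $b=l(\beta)$, and the excess $\binom{a}{2}+\binom{b}{2}-\binom{d}{2}=b(a-1)$ (or $ab=b(b+d)$ once you absorb the fact that $\beta'$ has exactly $b$ parts, each $\ge 1$) is indeed the area that must be accounted for inside $\nu$. But the proposal stops exactly where the lemma begins. The ``gluing rule'' is never defined: you say the merging should be chosen ``in such a way that the combined object is again a partition'' and that ``Warnaar's bijection is exactly the rule that performs this gluing,'' which is circular, since the existence of such a rule is precisely what is being asserted. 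The genuinely nontrivial point, which your sketch does not address, is that for a \emph{fixed} $d$ the parameter $b$ ranges over all integers $b\ge\max(0,-d)$, so the images of all these infinitely many strata must tile the set of \emph{all} partitions $\nu\in P$ exactly once; a case split on the sign of $d$ plus conjugation does not by itself deliver either surjectivity or injectivity.

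The standard mechanism that closes this gap is the Durfee rectangle of eccentricity $d$: every $\nu\in P$ contains a unique maximal $b\times(b+d)$ rectangle in its Young diagram, and removing it leaves a partition with at most $b$ rows to its right and a partition with parts of size at most $b+d$ below it; the rectangle has area $b(b+d)=ab$, which is exactly your excess, and the two leftover pieces are $\beta'$ (conjugated) and $\alpha'$. Uniqueness of the maximal such rectangle is what makes the map well defined and invertible, and it is the step your argument would need to state and prove (equivalently, the identity $\sum_{b}q^{b(b+d)}/\bigl((q;q)_{b+d}(q;q)_b\bigr)=1/(q;q)_\infty$). Note also that the paper itself does not prove this lemma but cites Warnaar's construction directly, so to make your proof self-contained you must actually carry out this dissection rather than defer it.
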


\begin{proof}
See \cite{Wa}, pages 48--49, for a description of the bijection.
\end{proof}

The next  theorem is the main general result of this paper. (We present it in a form that suffices for our purposes, even though it could easily be stated in more general terms.) It is an immediate corollary of the following lemma:

\begin{lemma}\label{N}
Fix integers $t\geq 1$, $C_1,\dots,C_t\geq 1$, and $0\leq A_i\leq C_i/2$ for all $i=1,\dots ,t$. Let $S$ be the set containing one copy of all positive integers congruent to $\pm A_i$ modulo $C_i$ for each $i$, and let $D_S(N)$  be the number of partitions of $N$ into distinct elements of $S$, where we require such partitions to have an odd number of parts if no $A_i$ is equal to zero. Finally, set $r=|\{A_i=0\}|-1$, adopting the convention that $|X|=1$ if $X=\emptyset$. 

Then, for all $N\geq 1$, 
$$2^{r}\cdot D_S(N)$$
is the number of solutions $(\nu_1,\dots,\nu_{t};d_1,\dots,d_{t})$ to the equation
\begin{equation}\label{eee}
\sum_{i=1}^{t}C_i|\nu_i|+\sum_{i=1}^{t}C_i{{d_i}\choose{2}} +\sum_{i=1}^{t}A_{i}d_{i}=N,
\end{equation}
where $\nu_i\in P$ and $d_i\in \mathbb Z$ for all $i$, and $\sum_{i=1}^{t}d_i$ is odd.
\end{lemma}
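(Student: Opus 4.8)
The plan is to reduce the statement, via Lemma \ref{warnaar} applied coordinatewise, to a purely combinatorial counting problem about $t$-tuples of pairs $(\alpha_i,\beta_i)$, and then to recognize the resulting count as $2^r\cdot D_S(N)$ by merging the $t$ pieces into a single partition into distinct elements of $S$. First I would observe that, for a fixed index $i$, Lemma \ref{warnaar} gives a bijection between pairs $(\nu_i,d_i)$ with $\nu_i\in P$, $d_i\in\Z$ and triples $(\alpha_i,\beta_i,d_i)$ with $\alpha_i\in D_0$, $\beta_i\in D$, $d_i=l(\alpha_i)-l(\beta_i)$, under which $|\nu_i|+\binom{d_i}{2}=|\alpha_i|+|\beta_i|$. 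Multiplying by $C_i$ and adding the terms $A_id_i=A_i l(\alpha_i)-A_i l(\beta_i)$, the left-hand side of \eqref{eee} becomes
$$\sum_{i=1}^t\left(C_i|\alpha_i|+A_i l(\alpha_i)\right)+\sum_{i=1}^t\left(C_i|\beta_i|-A_i l(\beta_i)\right).$$
Now I would interpret each summand combinatorially: if $\alpha_i=(\alpha_i^{(1)}>\dots>\alpha_i^{(k)}\ge 0)$ has distinct nonnegative parts, then adding $A_i$ to the smallest part, $C_i+A_i$ to the next, and in general $(j-1)C_i+A_i$ to the $j$-th smallest part produces a partition into $k=l(\alpha_i)$ distinct positive integers all congruent to $A_i$ modulo $C_i$; this is a bijection onto such partitions, and its weight is exactly $C_i|\alpha_i|+A_i l(\alpha_i)$ (using $\sum_{j=1}^k (j-1)=\binom{k}{2}$, which accounts precisely for the $\binom{d_i}{2}$-type bookkeeping). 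Symmetrically, if $\beta_i=(\beta_i^{(1)}>\dots>\beta_i^{(m)}>0)$ has distinct positive parts, then subtracting $A_i$ from the largest part, $C_i+A_i$ from the next, etc., gives a partition into $m=l(\beta_i)$ distinct positive integers congruent to $-A_i$ modulo $C_i$ (one checks the smallest resulting part, coming from $\beta_i^{(m)}\ge m$, is $\ge mC_i-A_i-(m-1)C_i=C_i-A_i>0$), with weight $C_i|\beta_i|-A_i l(\beta_i)$; again a bijection onto such partitions.

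Putting these together, a solution of \eqref{eee} corresponds to a choice, for each $i$, of a partition into distinct positive parts $\equiv A_i\pmod{C_i}$ together with one into distinct positive parts $\equiv -A_i\pmod{C_i}$, of total weight $N$. When $A_i\ne 0$ and $A_i\ne C_i/2$, the residues $A_i$ and $-A_i$ are distinct mod $C_i$ and distinct from everything arising from other indices, so these pieces assemble without overlap into a single partition into distinct elements of $S$; when $A_i=C_i/2$ the residue classes $A_i$ and $-A_i$ coincide, and two a priori different pieces in that class must be merged — here I would check that the constraint $A_i\le C_i/2$ together with the strict inequalities forces the union to again be a partition into distinct parts, with the correspondence staying bijective (this is the one spot needing a small argument, and I expect it to be the main technical obstacle along with the parity bookkeeping below). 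Finally one must track the two remaining constraints: the parity condition on $\sum d_i$ and the factor $2^r$.

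For the parity, note $\sum_i d_i=\sum_i l(\alpha_i)-\sum_i l(\beta_i)$, and the total number of parts of the assembled partition of $N$ is $\sum_i l(\alpha_i)+\sum_i l(\beta_i)$; these two sums have the same parity, so ``$\sum d_i$ odd'' is equivalent to ``the partition of $N$ into distinct elements of $S$ has an odd number of parts''. If some $A_i=0$, then the class $0\pmod{C_i}$ is omitted from $S$ but the corresponding $\alpha_i$ has parts $\equiv 0\pmod{C_i}$ including possibly a part equal to $0$: here I would account for the discrepancy by noting that such an $\alpha_i\in D_0$ decomposes as an element of $D$ (the positive parts) together with an optional extra part $0$, which contributes nothing to the weight but flips the length parity — this is precisely where the factors of $2$ come from. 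Indexing the zero-classes $A_{i_1}=\dots=A_{i_{r+1}}=0$, one gets $r+1$ independent $0/1$ choices (``is $0$ a part of $\alpha_{i_j}$?''), but the overall parity of $\sum l(\alpha_i)+\sum l(\beta_i)$ is then determined by all of them jointly, so only $r$ of the choices are free once we impose the odd-number-of-parts condition on the resulting $S$-partition — while if no $A_i=0$ the condition is imposed directly with $r=0$ and $2^r=1$, matching the definition of $D_S(N)$. Assembling these bijections, the number of solutions to \eqref{eee} with $\sum d_i$ odd equals $2^r\cdot D_S(N)$, as claimed. The theorem stated immediately before the lemma follows by specializing the data $(t;C_i;A_i)$ to the relevant modular cases.
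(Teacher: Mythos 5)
Your overall architecture --- apply Warnaar's bijection coordinatewise to turn each $(\nu_i,d_i)$ into $(\alpha_i,\beta_i)$, rescale into the residue classes $\pm A_i \pmod{C_i}$, read the parity of $\sum_i d_i$ off the total number of parts, and extract the factor $2^r$ from an optional zero part in each zero class --- is exactly the paper's proof run in the reverse direction, and your $2^r$ bookkeeping ($r+1$ independent binary choices cut in half by one parity constraint) is correct. However, two steps as written do not work. First, the explicit rescaling maps are wrong: adding $(j-1)C_i+A_i$ to the $j$-th smallest part of $\alpha_i$ produces a part congruent to $\alpha_i^{(j)}+A_i$, not to $A_i$, modulo $C_i$, and its total weight is $|\alpha_i|+C_i\binom{k}{2}+kA_i$ rather than the $C_i|\alpha_i|+A_i\,l(\alpha_i)$ you need (and similarly the positivity check for the $\beta_i$-map uses $\beta_i^{(m)}\ge m$, which is false for the smallest part). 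The map you actually want is the entrywise affine map $a\mapsto C_ia+A_i$ on $\alpha_i\in D_0$ and $b\mapsto C_ib-A_i$ on $\beta_i\in D$ (the paper's $\lambda_i=C_i\lambda_i^{\ast}+A_i$); this lands in the right residue class, has the right weight, and is positive since $C_ib-A_i\ge C_i-A_i\ge C_i/2>0$. No staircase is needed --- the $\binom{d_i}{2}$ bookkeeping is already absorbed by Lemma \ref{warnaar}.

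Second, the ``merging'' step that you single out as the main technical obstacle rests on a misreading of $S$, and the resolution you propose (showing the merged partition has distinct parts) would fail. The set $S$ contains one \emph{separate copy} of the class $+A_i\pmod{C_i}$ and one of $-A_i\pmod{C_i}$ for \emph{each} $i$, even when these residue classes coincide --- which happens both when $A_i=C_i/2$ and when two different indices produce overlapping classes (as in Lemma \ref{3.2}, where $A_1=A_2=1$ and $C_1=C_2=6$, so your assumption that distinct indices contribute disjoint residues is also false). ``Distinct elements of $S$'' only forbids repeating an element within the same copy: in Theorem \ref{5.222} the odd multiples of $7$ occur in two copies precisely because $7\equiv-7\pmod{14}$, and the partition $(7,\overline{7},1)$ of $15$ uses $7$ from both. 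If one literally merged the pieces into a single uncolored partition with genuinely distinct parts, the count would come out wrong. With the correct colored reading, no merging argument is needed: the $2t$ rescaled pieces live in $2t$ pairwise distinguishable copies and assemble into a partition counted by $D_S(N)$ tautologically. With these two corrections your argument coincides with the paper's.
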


\begin{proof} This proof will  greatly generalize, but proceed for the most part in a similar way to, Kim's combinatorial proof of the Farkas-Kra identity modulo 7 (cf. \cite{Ki}, second proof of Theorem 2.1). A substantial difference is that we are going  to push the bijectivity of this type of argument all the way through, so that Theorem \ref{main} below will give us (ii)  equivalent to (i), which is going to be the crucial tool in attacking the identities of the Schr\"oter, Russell and Ramanujan type. 

Fix $N\geq 1$. We start by assuming that all of the $A_i$ are positive, and consider any partition $\pi$ of $N$ into distinct elements of $S$. We first split $\pi$ into $t$ pairs of partitions $(\lambda_1, \mu_1), \dots, (\lambda_t, \mu_t)$, where, for any $i$, both $\lambda_i$ and $\mu_i$ are in $D$, all  parts of $\lambda_i$ come from the copy of the integers of $S$ that are congruent to $ A_i$ (mod $C_i$), and all  parts of $\mu_i$ come from the copy of the integers of $S$ that are congruent to $-A_i$ (mod $C_i$). 

Let us now construct a new partition $\pi^{\ast}$ from $\pi$, which we split as $(\lambda_1^{\ast}, \mu_1^{\ast}), \dots,(\lambda_t^{\ast}, \mu_t^{\ast})$, where (entrywise) $\lambda_i^{\ast}=(\lambda_i-A_i)/C_i$ and $\mu_i^{\ast}=(\mu_i+A_i)/C_i$, for all  $i$. Notice that, clearly, $\lambda_i^{\ast}\in D_0$ and $\mu_i^{\ast}\in D$, for all $i$. 

Set $$d_i=l(\lambda_i )-l(\mu_i )=l(\lambda_i^{\ast} )-l(\mu_i^{\ast}).$$ 

Note that $\sum_{i=1}^{t}d_i \equiv \sum_{i=1}^{t}(l(\lambda_i )+l(\mu_i ))=l(\pi)$ (mod $2$); that is, $\sum_{i=1}^{t}d_i$ is odd if and only if $\pi$ has an odd number of parts.

By Lemma \ref{warnaar}, the triples $(\lambda_i^{\ast}, \mu_i^{\ast},d_i)$ are in (Warnaar's) bijection with  pairs $(\nu_i, d_i)$, where $\nu_i \in P$ and $|\lambda_i^{\ast}| + |\mu_i^{\ast}| = |\nu_i|+\binom{d_i}{2}$.

Therefore, it is easy to see that
$$N=|\pi |=\sum_{i=1}^{t}(|\lambda_i |+|\mu_i |)=\sum_{i=1}^{t}C_i|\lambda_i^{\ast}|+\sum_{i=1}^{t}C_i|\mu_i^{\ast}|+\sum_{i=1}^{t}d_iA_i=\sum_{i=1}^{t}C_i|\nu_i|+\sum_{i=1}^{t}C_i\binom{d_i}{2}+\sum_{i=1}^{t}d_iA_i.$$

Since all previous steps are reversible, this implies that the number of solutions to  equation (\ref{eee}) is $D_S(N)=2^{0}\cdot D_S(N)$, as desired.

This completes the proof when all of the $A_i$ are positive.

Suppose now that some $A_i=0$. Let us assume, without loss of generality, that $A_1=A_2=\dots =A_{r+1}=0$ for some $r\geq 0$, and that all other $A_j\neq 0$. The proof of this case goes along the same lines, except that now  the partitions $\lambda_i^{\ast}$ are in $D$ (not in $D_0$), for all $i\leq r+1$. Therefore, it is easy to see that, for $i\leq r+1$, the same partition $\lambda_i$ corresponds to exactly two solutions to  equation (\ref{eee}) --- one given by operating with Warnaar's bijection with $\lambda^{\ast}_i$, and the other with $\lambda^{\ast}_i$ to which a 0 is added at the end. 

Thus, each of our partitions $\pi$ corresponds bijectively to $2^{r+1}$ solutions to  (\ref{eee}), when $\sum_{i=1}^{t}d_i$ is arbitrary. 

Now, it is immediate to see that, in every solution to (\ref{eee}), $d_i$ can be replaced by $1-d_i$, for any  $i\leq r+1$. Since the parity of $d_i$ and $1-d_i$ is  different, it follows that exactly $2^{r+1}/2=2^r$ of the solutions to  (\ref{eee}) corresponding to the partition $\pi$ yield an odd value for $\sum_{i=1}^{t}d_i$. This easily concludes the proof of the lemma.
\end{proof}

\begin{theorem}\label{main}
Consider the equation
\begin{equation}\label{t}
\sum_{i=1}^{t}C_i|\mu_i|+\sum_{i=1}^{t}C_i{{d_i}\choose{2}} +\sum_{i=1}^{t}A_{i}d_{i}=\sum_{i=1}^{t}C_i|\alpha_i|+\sum_{i=1}^{t}C_i{{e_i}\choose{2}} +\sum_{i=1}^{t}B_{i}e_{i}+m,
\end{equation}
for given integers $t\geq 1$, $C_1,\dots,C_t\geq 1$, $0\leq A_i\leq C_i/2$ and $0\leq B_i\leq C_i/2$ for all $i$, and $m\geq 0$. Let $S$ be the set containing one copy of all positive integers congruent to $\pm A_i$ modulo $C_i$ for each $i$, and $T$ the set containing one copy of all positive integers congruent to $\pm B_i$ modulo $C_i$ for each $i$. Let $D_S(N)$ (respectively, $D_T(N)$) be the number of partitions of $N$ into distinct elements of $S$ (respectively, $T$), where we require such partitions to have an odd number of parts if no $A_i$ (respectively, no $B_i$) is equal to zero. Finally, set $$p=|\{B_i=0\}|-|\{A_i=0\}|,$$
adopting the convention that $|X|=1$ if $X=\emptyset$. 

Then the following are equivalent:

\begin{enumerate}
\item[(i)] For any $N\geq N_0\ge 1$, the number of tuples  $(\mu_1,\dots,\mu_{t};d_1,\dots,d_{t})$ such that the left-hand side of (\ref{t}) equals $N$, $\mu_i\in P$ and $d_i\in \mathbb Z$ for all $i$, and $\sum_{i=1}^{t}d_i$ is odd, is equal to the number of tuples $(\alpha_1,\dots,\alpha_{t};e_1,\dots,e_{t})$ such that the right-hand side of (\ref{t}) equals $N$,  $\alpha_i\in P$ and $e_i\in \mathbb Z$ for all $i$, and $\sum_{i=1}^{t}e_i$ is odd;

\item[(ii)] For any $N\geq N_0\ge 1$, $$D_S(N)=2^p\cdot D_T(N-m).$$
\end{enumerate}
\end{theorem}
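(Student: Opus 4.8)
The plan is to derive Theorem \ref{main} as a direct corollary of Lemma \ref{N}, applied in turn to each side of equation (\ref{t}). Throughout, write $r_A=|\{A_i=0\}|-1$ and $r_B=|\{B_i=0\}|-1$ (with the same convention $|X|=1$ for $X=\emptyset$, so that $r_A,r_B\ge 0$), and note that then $p=|\{B_i=0\}|-|\{A_i=0\}|=r_B-r_A$. First I would apply Lemma \ref{N} with the data $(t;C_1,\dots,C_t;A_1,\dots,A_t)$: for every integer $M\ge 1$, the number of tuples $(\mu_1,\dots,\mu_t;d_1,\dots,d_t)$ with $\mu_i\in P$, $d_i\in\mathbb Z$, $\sum_i d_i$ odd, and $\sum_i C_i|\mu_i|+\sum_i C_i\binom{d_i}{2}+\sum_i A_i d_i=M$ equals $2^{r_A}\cdot D_S(M)$. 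Since this last expression is exactly the left-hand side of (\ref{t}), taking $M=N$ shows that the quantity counted on the ``left-hand side'' in statement (i) equals $2^{r_A}\cdot D_S(N)$.

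Second, I would apply Lemma \ref{N} again, now with the data $(t;C_1,\dots,C_t;B_1,\dots,B_t)$ (the $C_i$ being unchanged), to conclude that for every $M\ge 1$ the number of tuples $(\alpha_1,\dots,\alpha_t;e_1,\dots,e_t)$ with $\alpha_i\in P$, $e_i\in\mathbb Z$, $\sum_i e_i$ odd, and $\sum_i C_i|\alpha_i|+\sum_i C_i\binom{e_i}{2}+\sum_i B_i e_i=M$ equals $2^{r_B}\cdot D_T(M)$. The one bookkeeping point is the additive constant $m$: the right-hand side of (\ref{t}) equals $N$ precisely when this same expression equals $M=N-m$, so the quantity counted on the ``right-hand side'' in (i) equals $2^{r_B}\cdot D_T(N-m)$. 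For this to follow from Lemma \ref{N} directly one needs $N-m\ge 1$; the remaining small cases are handled by hand. If $N<m$, then since $C_i\binom{e_i}{2}+B_i e_i\ge 0$ for every integer $e_i$ (a one-line check using $0\le B_i\le C_i/2$, as the quadratic $\tfrac{C_i}{2}x^2+(B_i-\tfrac{C_i}{2})x$ has its vertex in $[0,\tfrac12]$), the right-hand expression is nonnegative, hence there are no admissible tuples, while also $D_T(N-m)=0$; and if $N=m$, the only admissible tuples have all $\alpha_i=\emptyset$ and each $e_i\in\{0,1\}$ with $e_i=1$ allowed only where $B_i=0$, and a direct count of those with $\sum_i e_i$ odd equals $2^{r_B}\cdot D_T(0)$ under the stated convention on the empty partition. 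So the formula $2^{r_B}\cdot D_T(N-m)$ is valid for all $N\ge 1$.

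Combining the two computations, for each fixed $N\ge N_0$ the statement in (i) for that $N$ reads $2^{r_A}\cdot D_S(N)=2^{r_B}\cdot D_T(N-m)$, and dividing by the positive integer $2^{r_A}$ this is equivalent to $D_S(N)=2^{r_B-r_A}\cdot D_T(N-m)=2^{p}\cdot D_T(N-m)$, i.e.\ to the statement in (ii) for that $N$; quantifying over $N\ge N_0$ then gives the equivalence of (i) and (ii). I do not anticipate a genuine difficulty here: essentially all of the combinatorial content is already carried by Lemma \ref{N}, and the only things that need care are the exponent identity $p=r_B-r_A$ coming from the $|X|=1$ convention and the boundary range $1\le N\le m$ treated above.
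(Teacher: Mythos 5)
Your proposal is correct and follows exactly the route the paper intends: the paper's proof of Theorem \ref{main} is literally ``Straightforward from Lemma \ref{N},'' and you have simply written out that deduction, including the exponent bookkeeping $p=r_B-r_A$ and the boundary cases $N\le m$, all of which check out.
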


\begin{proof}
Straightforward from Lemma \ref{N}.
\end{proof}

\section{The identities of the Schr\"oter, Russell and Ramanujan type}

The object of the rest of this paper is to show bijectively, using Theorem \ref{main}, four  of the five partition identities proved by Berndt in \cite{Be}, which correspond to the five exceptional modular equations of prime degree due to Schr\"oter, Russell and Ramanujan, as we discussed in the introduction. They will be proved in Theorems \ref{5.222}, \ref{3.222}, \ref{4.222}, and \ref{6.222}. We have not been able  to show the identity modulo 23; we will state an equation equivalent to it via Theorem \ref{main} as Conjecture \ref{7.2}.

We start with the partition identities modulo 7 (i.e., the Farkas-Kra identity) and  modulo 3. These are the two of the five for which a bijective proof is already known, thanks to the work of Kim \cite{Ki} (we will just slightly modify Kim's bijection here so as to fit our setting).

\begin{lemma}\label{5.2}
Condition (i) of Theorem \ref{main} holds for $N_0=1$, $t=4$, $C_1=\dots=C_4=14$, $m=1$, and
$$(A_1,A_2,A_3,A_{4})=(1,3,5,7),{\ }{\ }(B_1,B_2,B_3,B_{4})=(0,2,4,6).$$
\end{lemma}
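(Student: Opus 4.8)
The plan is to use Theorem \ref{main}: the equivalence (i) $\Leftrightarrow$ (ii) reduces the stated instance of condition (i) to an ordinary (uncolored-looking, but colored) partition identity about $D_S(N)$ and $D_T(N-m)$. Let me first unwind what the data $t=4$, $C_i=14$, $(A_1,\dots,A_4)=(1,3,5,7)$, $(B_1,\dots,B_4)=(0,2,4,6)$, $m=1$ actually say. The set $S$ consists of one copy each of the positive integers congruent to $\pm1,\pm3,\pm5,\pm7\pmod{14}$; since these residues $\{1,3,5,7,9,11,13\}$ exhaust all odd residues mod $14$, $S$ is just the set of positive odd integers, with one copy each. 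Because no $A_i$ is zero, $D_S(N)$ counts partitions of $N$ into distinct odd parts \emph{with an odd number of parts}. The set $T$ consists of one copy each of the integers congruent to $\pm0,\pm2,\pm4,\pm6\pmod{14}$; the residues $\{0,2,4,6,8,10,12\}$ are all even residues mod $14$, so $T$ is the set of positive even integers, \emph{except} that the multiples of $14$ — which are $\equiv 0\pmod{14}$, i.e. $\equiv B_1$ — get counted once as well (here $T$ as a set has one copy, but the weighting $2^p$ will supply the doubling). Since exactly one $B_i$ (namely $B_1$) is zero and no $A_i$ is zero, $p=|\{B_i=0\}|-|\{A_i=0\}|=1-1=0$... wait, with the convention $|X|=1$ when $X=\emptyset$, $|\{A_i=0\}|=1$ and $|\{B_i=0\}|=1$, so $p=0$. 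Hmm, that gives $D_S(N)=D_T(N-1)$, no factor of $2$; but the Farkas--Kra identity as stated in the introduction wants the multiples of $7$ in two colors on the odd-distinct side, not here — so I should double-check whether the factor $2^p$ and the ``two copies'' enter through the structure of $D_T$ rather than through $p$, or whether $T$ should literally be taken with two copies of multiples of $14$.

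Setting that bookkeeping aside, the mathematical heart is: for all $N\geq 1$, the number of partitions of $N$ into distinct odd parts with an odd number of parts equals the number of partitions of $N-1$ into distinct even parts, with multiples of $14$ appearing in two colors (this is the Farkas--Kra identity modulo $7$, rescaled by $2$: the usual statement compares partitions of $2N+1$ into distinct odd parts with partitions of $2N$ into distinct even parts with $14\mid$ parts doubled — note $2N+1$ is odd hence automatically forces... actually the parity-of-number-of-parts condition is what pins down $2N+1$ versus generic $N$; a partition into distinct odd parts summing to an odd number has an odd number of parts, and conversely). So condition (i) for this data is \emph{equivalent via Theorem \ref{main}} to the Farkas--Kra identity modulo $7$, which is a known theorem (\cite{FK}, with Kim's bijective proof in \cite{Ki}). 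The cleanest route is therefore: (1) spell out $S$ and $T$ explicitly as above, (2) compute $p$ and $m$, (3) invoke Theorem \ref{main} to say (i) $\Leftrightarrow$ (ii), and (4) observe that (ii) is precisely the (known) Farkas--Kra identity modulo $7$ — or, if we want to be self-contained and bijective, cite Kim's bijection \cite{Ki} which proves exactly statement (ii), noting the slight reindexing.

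The one genuinely delicate step — and the place I expect to spend real effort — is matching conventions: verifying that the residue classes $\{\pm A_i \bmod 14\}$ and $\{\pm B_i \bmod 14\}$ really do produce ``distinct odd parts'' and ``distinct even parts with multiples of $14$ doubled'' respectively (in particular that no residue is double-counted and that the $\equiv 0$ class contributes the extra copy of multiples of $14$ with the right multiplicity), and that the ``odd number of parts'' side-condition baked into $D_S$ together with the shift $m=1$ lines up exactly with the $2N+1$ versus $2N$ phrasing of the classical identity. Once the dictionary is pinned down, the proof is immediate: it is one application of Theorem \ref{main} plus a citation. I would write it as a short paragraph identifying $S$, $T$, $m$, $p$, then ``by Theorem \ref{main}, (i) is equivalent to $D_S(N)=2^p D_T(N-1)$, which is the Farkas--Kra identity modulo $7$ (\cite{FK}; see also \cite{Ki})'', possibly with a half-line explaining the $2N+1/2N$ normalization. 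The main obstacle is thus not depth but care with the combinatorial translation.
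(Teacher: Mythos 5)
Your proposal inverts the logical direction of the lemma and, in doing so, omits its entire mathematical content. In the paper, Lemma \ref{5.2} is the \emph{input}: condition (i) is established directly, by an explicit bijection on the integer tuples, and then Theorem \ref{main} converts it into the Farkas--Kra identity (Theorem \ref{5.222}). Concretely, the paper writes every $d$-tuple with odd sum as $(d_1,d_2,d_3,d_4)=(2s+1-k-l+n,\,k-n,\,l-n,\,n)$ and sends it to $(e_1,e_2,e_3,e_4)=(2n+1-k-l+s,\,k-s,\,l-s,\,-s)$, keeping $\alpha_i=\mu_i$; one then checks that this interchange of the roles of $n$ and $s$ (with the sign flip $e_4=-s$) preserves the quantity in equation (\ref{t}) and preserves oddness of the sum. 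Your plan instead is to invoke Theorem \ref{main} to reduce (i) to (ii) and then cite the Farkas--Kra identity as known. As a free-standing logical argument that is not wrong -- the identity does have independent proofs -- but it makes the paper's own derivation of Theorem \ref{5.222} circular, and it never produces the bijection that the lemma exists to supply. The ``genuinely delicate step'' you defer to is not the delicate step at all; the work is in exhibiting and verifying the map on tuples, which your proposal does not contain.

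There is also a concrete error in your dictionary. You suggest that the doubling of the multiples of $7$ (resp.\ $14$) is supplied by the factor $2^p$. It is not: here $p=1-1=0$, consistent with Theorem \ref{5.222} having no factor of $2$. The extra copies arise because $S$ and $T$ contain one copy of the class $+A_i$ and one copy of the class $-A_i$ \emph{separately} for each $i$ (this is how Lemma \ref{N} splits a partition into the $\lambda_i$ and $\mu_i$); since $A_4=7=C_4/2$ gives $+7\equiv-7\pmod{14}$, the odd multiples of $7$ occur twice in $S$, and since $B_1=0$ gives $+0\equiv-0$, the multiples of $14$ occur twice in $T$. Your reading (``$T$ as a set has one copy, but the weighting $2^p$ will supply the doubling'') would make the translation to the classical identity fail.
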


\begin{proof}
It is easy to verify that the result  follows by associating the tuple
$$(\mu_1,\mu_2,\mu_3,\mu_4;d_1=2s+1-k-l+n,d_2=k-n,d_3=l-n,d_4=n),$$
where $n,l,k,$ and $s$ are arbitrary integers, to the tuple
$$(\alpha_1=\mu_1,\alpha_2=\mu_2,\alpha_3=\mu_3,\alpha_4=\mu_4;e_1=2n+1-k-l+s,e_2=k-s,e_3=l-s,e_4=-s).$$
(This is exactly Kim's map of \cite{Ki}, Theorem 1.1, except that here we needed to set $e_4=-s$ in place of $e_4=s$.)
\end{proof}

\begin{example}\label{fk15}
For any $N\geq 1$, Lemma \ref{5.2} puts the tuples $(\mu_1,\dots,\mu_{4};d_1,\dots,d_{4})\in P^4\times\mathbb{Z}
^4$ such that the left-hand side of the following equation equals $N$ and $d_1+d_2+d_3+d_4$ is odd in bijection with the tuples $(\alpha_1,\dots,\alpha_{4};e_1,\dots,e_{4})\in P^4\times\mathbb{Z}^4$ such that the right-hand side equals $N$ and $e_1+e_2+e_3+e_4$ is odd:

\begin{equation}\label{fk}
14\sum_{i=1}^{4}|\mu_i|+14\sum_{i=1}^{4}{{d_i}\choose{2}} +1d_1+3d_2+5d_3+7d_4=14\sum_{i=1}^{4}|\alpha_i|+14\sum_{i=1}^{4}{{e_i}\choose{2}} +0e_1+2e_2+4e_3+6e_4+1.
\end{equation}

Let $N=15$. It is easy to check that there are exactly six such tuples. The left-hand side of (\ref{fk}) equals 15 for $(\mu_1,\mu_2,\mu_3,\mu_{4};d_1,d_2,d_3,d_{4})$ equal to:

$$((1),\emptyset,\emptyset,\emptyset;1,0,0,0),{\ }{\ }{\ }{\ }{\ }{\ }(\emptyset,(1),\emptyset,\emptyset;1,0,0,0),{\ }{\ }{\ }{\ }{\ }{\ }(\emptyset,\emptyset,(1),\emptyset;1,0,0,0),$$
$$(\emptyset,\emptyset,\emptyset,(1);1,0,0,0),{\ }{\ }{\ }{\ }{\ }{\ }(\emptyset,\emptyset,\emptyset,\emptyset;0,1,1,1),{\ }{\ }{\ }{\ }{\ }{\ }(\emptyset,\emptyset,\emptyset,\emptyset;0,1,1,-1).$$

The bijection given in the proof of Lemma \ref{5.2}  maps the above solutions, respectively, to the following six tuples $(\alpha_1,\alpha_2,\alpha_3,\alpha_{4};e_1,e_2,e_3,e_{4})$  for which the right-hand side of equation (\ref{fk}) equals 15:

$$((1),\emptyset,\emptyset,\emptyset;1,0,0,0),{\ }{\ }{\ }{\ }{\ }{\ }(\emptyset,(1),\emptyset,\emptyset;1,0,0,0),{\ }{\ }{\ }{\ }{\ }{\ }(\emptyset,\emptyset,(1),\emptyset;1,0,0,0),$$
$$(\emptyset,\emptyset,\emptyset,(1);1,0,0,0),{\ }{\ }{\ }{\ }{\ }{\ }(\emptyset,\emptyset,\emptyset,\emptyset;0,1,1,-1),{\ }{\ }{\ }{\ }{\ }{\ }(\emptyset,\emptyset,\emptyset,\emptyset;-1,0,0,0).$$
\end{example}

\begin{theorem}[\cite{Ki}]\label{5.222}
Let $S$ be the set containing one copy of the odd positive integers and one more copy of the odd positive multiples of 7, and $T$ the set containing one copy of the even positive integers and one more  copy of the even positive multiples of 7. Then, for any $N\geq 1$,
$$D_S(N)=D_T(N-1).$$
\end{theorem}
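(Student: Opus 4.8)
The plan is to derive Theorem~\ref{5.222} as a direct application of the equivalence furnished by Theorem~\ref{main}, using Lemma~\ref{5.2} to certify that condition (i) holds in the relevant setting. First I would unwind the definitions to check that the data in Lemma~\ref{5.2} match the hypotheses of Theorem~\ref{main}: we take $t=4$, $C_1=\dots=C_4=14$, $m=1$, the parameters $(A_1,A_2,A_3,A_4)=(1,3,5,7)$ and $(B_1,B_2,B_3,B_4)=(0,2,4,6)$, all of which satisfy $0\le A_i\le C_i/2=7$ and $0\le B_i\le C_i/2$. Then the set $S$ of Theorem~\ref{main} is one copy of the positive integers congruent to $\pm1,\pm3,\pm5,\pm7\pmod{14}$ --- but these residues are exactly the odd residues mod $14$, and the class $\pm7$ is a single class, so $S$ is one copy of all odd positive integers together with one extra copy of the odd multiples of $7$; this is precisely the set $S$ in the statement of Theorem~\ref{5.222}. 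Similarly $T$, being one copy of the integers congruent to $0,\pm2,\pm4,\pm6\pmod{14}$, is one copy of all even positive integers with one extra copy of the even multiples of $7$, matching the $T$ in the theorem.

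Next I would pin down the numerical constant $p$ and the parity condition. Here $\{A_i=0\}=\emptyset$, so by the stated convention $|\{A_i=0\}|=1$; and $\{B_i=0\}=\{B_1\}$, so $|\{B_i=0\}|=1$. Hence $p=1-1=0$, and $2^p=1$. Also, since no $A_i$ vanishes, the partitions counted by $D_S(N)$ are required to have an odd number of parts; but every element of $S$ is odd, so a partition of $N$ into distinct elements of $S$ automatically has an odd number of parts exactly when $N$ is odd, and in fact the parity constraint is harmless because it is subsumed. Since $B_1=0$, on the $T$ side there is no parity restriction: $D_T(N)$ counts all partitions of $N$ into distinct elements of $T$. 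With these identifications, condition (ii) of Theorem~\ref{main} reads $D_S(N)=2^0\cdot D_T(N-1)=D_T(N-1)$ for all $N\ge N_0=1$, which is precisely the assertion of Theorem~\ref{5.222}.

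Finally, to invoke Theorem~\ref{main} I need condition (i) for this data, and that is exactly the content of Lemma~\ref{5.2}, which was established via Kim's explicit bijection (the map sending $(\mu_1,\mu_2,\mu_3,\mu_4;d_1,d_2,d_3,d_4)$ with $d_i$ expressed through $n,l,k,s$ to the tuple with $e_i$ obtained by the stated substitution, the only change from Kim's original being $e_4=-s$). Since Lemma~\ref{5.2} gives (i) with $N_0=1$, Theorem~\ref{main} yields (ii) with $N_0=1$, and we are done.

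The argument is essentially bookkeeping: the one place that genuinely requires care --- and the step I would present most explicitly --- is the translation of the congruence conditions ``$\pm A_i \pmod{C_i}$'' into the concrete description of $S$ and $T$, since one must notice that the class $\pm7 \pmod{14}$ is a single residue class (so it contributes a second copy of the odd multiples of $7$ rather than two, and likewise $0 \pmod{14}$ is self-paired), and one must check that the residues $\{1,3,5,7\} \cup \{-1,-3,-5,-7\}$ exhaust the odd classes mod $14$ while $\{0,2,4,6\}\cup\{-2,-4,-6\}$ exhaust the even ones. Once that identification is in place, together with the verification $p=0$, the result drops out of Theorem~\ref{main} and Lemma~\ref{5.2} with no further computation.
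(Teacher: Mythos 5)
Your proposal is correct and follows exactly the paper's route: the paper proves Theorem \ref{5.222} by citing Theorem \ref{main} together with Lemma \ref{5.2} as "straightforward," and your write-up simply makes the bookkeeping explicit (matching the parameters, identifying $S$ and $T$ via the residue classes mod $14$ with the self-paired classes $\pm 7$ and $0$ each contributing a doubled copy, and computing $p=0$). No gaps.
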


\begin{proof}
Straightforward from Theorem \ref{main} and Lemma \ref{5.2}.
\end{proof}

\begin{example}

Let $N=15$ in  Theorem \ref{5.222}. By Example \ref{fk15} and Lemma \ref{N}, we have $$D_S(15)=D_T(14)=6.$$

Indeed, it is easy to check that 15 can be partitioned in the following six ways into distinct odd positive integers, where the multiples of 7 appear in two copies, say $7n$ and $\overline{7n}$:
$$(15), (11,3,1), (9,5,1), (7,5,3),  (\overline{7}, 5,3), (7,\overline{7},1).$$

Similarly, 14 can be partitioned in the  following six ways into distinct even positive integers, where the multiples of 7 appear in two copies:
$$(14), (\overline{14}), (12,2), (10,4), (8,6), (8,4,2).$$
\end{example}

\begin{lemma}\label{3.2}
Condition (i) of Theorem \ref{main} holds for $N_0=1$, $t=4$, $C_1=\dots=C_4=6$, $m=1$, and
$$(A_1,A_2,A_3,A_{4})=(1,1,3,3),{\ }{\ } (B_1,B_2,B_3,B_{4})=(0,0,2,2).$$
\end{lemma}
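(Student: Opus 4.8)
\textbf{Proof proposal for Lemma \ref{3.2}.}

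The plan is to mimic exactly the strategy used in Lemma \ref{5.2}: exhibit an explicit, manifestly invertible map between the two solution sets of equation (\ref{t}) for the given parameters, parametrizing each $d_i$ (respectively $e_i$) by a small collection of free integer variables so that the balance condition on (\ref{t}) becomes an identity and the parity of $\sum d_i$ is preserved. Concretely, with $t=4$, $C_1=\dots=C_4=6$, $m=1$, $(A_i)=(1,1,3,3)$ and $(B_i)=(0,0,2,2)$, I would first write the left-hand side of (\ref{t}) as
$$6\sum_{i=1}^{4}|\mu_i|+6\sum_{i=1}^{4}\binom{d_i}{2}+d_1+d_2+3d_3+3d_4$$
and the right-hand side as
$$6\sum_{i=1}^{4}|\alpha_i|+6\sum_{i=1}^{4}\binom{e_i}{2}+2e_2+2e_3+2e_4+1,$$
where I have already used $B_1=0$. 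Since the partition parts only enter through $\sum|\mu_i|$ and $\sum|\alpha_i|$, I would simply set $\alpha_i=\mu_i$ for all $i$, reducing the problem to matching the purely arithmetic $d$- and $e$-contributions.

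The key step is to choose the right free parameters. Following Kim's template, I would introduce integers $n,k,l,s$ and set
$$d_1=2s+1-k-l+n,\quad d_2=k-n,\quad d_3=l-n,\quad d_4=n,$$
so that $\sum d_i = 2s+1$ is automatically odd, and correspondingly
$$e_1=2n+1-k-l+s,\quad e_2=k-s,\quad e_3=l-s,\quad e_4=-s,$$
so that $\sum e_i = 2n+1$ is odd as well. One then checks that the map $(n,k,l,s)\mapsto(n',k',l',s')=(s,k,l,n)$ — i.e. swapping the roles of $n$ and $s$ while fixing $k,l$ — is an involution that interchanges the $d$-parametrization with the $e$-parametrization, and that under this correspondence the two sides of (\ref{t}) are equal. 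Verifying the last point is the computational heart of the argument: one expands $6\sum\binom{d_i}{2}+d_1+d_2+3d_3+3d_4$ and $6\sum\binom{e_i}{2}+2e_2+2e_3+2e_4+1$ as quadratic polynomials in $n,k,l,s$ and confirms they agree identically, exactly as in the modulo-$7$ case, perhaps after absorbing a constant shift into the ``$+1$'' coming from $m=1$. Because the substitution is linear with integer-matrix inverse, surjectivity and injectivity onto all integer tuples $(d_1,\dots,d_4)$ with odd sum (respectively $(e_1,\dots,e_4)$ with odd sum) are immediate, so this genuinely establishes the bijection required by condition (i), with $N_0=1$.

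The main obstacle I anticipate is purely bookkeeping: confirming that the coefficients $(1,1,3,3)$ on the $d_i$ and $(0,2,2,2)$ on the $e_i$ (the last being $2\cdot(B_1,\dots,B_4)=(0,2,4,6)$ — wait, here $B_i=(0,0,2,2)$ so the linear term is $2e_3+2e_4$, with $B_1=B_2=0$) combine correctly with the $6\binom{\cdot}{2}$ terms and the constant $m=1$. In other words, the only real risk is that the naive transplant of Kim's substitution needs a sign flip (as happened with $e_4=s\mapsto e_4=-s$ in Lemma \ref{5.2}) or a reindexing of which variable multiplies which $A_i,B_i$; once the correct assignment is pinned down, the verification is a routine expansion of quadratics in four variables. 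Since the structure $(A_i)=(1,1,3,3)$, $(B_i)=(0,0,2,2)$, $C_i\equiv 6$ is precisely the ``halved'' analogue of the modulo-$7$ data $(1,3,5,7)$, $(0,2,4,6)$, $C_i\equiv 14$, I expect the same four-parameter family to work verbatim up to such a sign, and the result then follows exactly as for Lemma \ref{5.2}.
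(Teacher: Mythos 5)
Your proposal is correct and is exactly the paper's proof: the paper disposes of Lemma \ref{3.2} by noting that the identical Kim-type substitution from Lemma \ref{5.2} (including the sign $e_4=-s$) works verbatim, and indeed one checks that both sides reduce to $3\sum d_i^2$ plus matching linear terms, with the discrepancy $6(s-n)$ from the squares cancelling against the linear parts. The only blemish is the stray $2e_2$ in your displayed right-hand side (since $B_2=0$ the linear term is just $2e_3+2e_4$), which you correct yourself later.
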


\begin{proof}
The exact same bijection as in Lemma \ref{5.2} easily gives the result.
\end{proof}

\begin{theorem}[\cite{Ki}]\label{3.222}
Let $S$ be the set containing 2 copies of the odd positive integers and 2 more copies of the odd positive multiples of 3, and $T$ the set containing 2 copies of the even positive integers and 2 more copies of the even positive multiples of 3. Then, for any $N\geq 1$,
$$D_S(N)=2D_T(N-1).$$
\end{theorem}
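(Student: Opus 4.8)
The plan is to read off Theorem \ref{3.222} from the equivalence ``(i) $\Leftrightarrow$ (ii)'' of Theorem \ref{main}, exactly as Theorem \ref{5.222} was obtained, but now fed with the data of Lemma \ref{3.2}. Concretely, I would take $t=4$, $C_1=\cdots=C_4=6$, $m=1$, $(A_1,A_2,A_3,A_4)=(1,1,3,3)$, $(B_1,B_2,B_3,B_4)=(0,0,2,2)$, and $N_0=1$. Lemma \ref{3.2} asserts that condition (i) of Theorem \ref{main} holds for these parameters, so Theorem \ref{main} immediately yields its condition (ii): for every $N\geq 1$,
$$D_S(N)=2^{p}\cdot D_T(N-1),$$
with $S$, $T$, $p$ as in the statement of Theorem \ref{main}. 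Everything after this is bookkeeping.

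Next I would compute $p$ and identify $S$ and $T$ concretely; the one subtlety to keep in mind is that when $A_i$ equals $0$ or $C_i/2$ the two signs in ``$\pm A_i$'' pick out the same residue class modulo $C_i$, so that class is taken into $S$ with multiplicity two (and likewise for $T$ and the $B_i$). Since $(A_1,\dots,A_4)=(1,1,3,3)$ has no zero entry, the stated convention gives $|\{A_i=0\}|=1$, and since $(B_1,\dots,B_4)=(0,0,2,2)$ has two zero entries, $|\{B_i=0\}|=2$; hence $p=2-1=1$ and the identity above reads $D_S(N)=2\,D_T(N-1)$. For $S$: the two indices with $A_i=1$ contribute one copy apiece of the integers $\equiv\pm 1\pmod 6$, that is, two copies of the odd positive integers not divisible by $3$; the two indices with $A_i=3=C_i/2$ contribute two copies apiece of the integers $\equiv 3\pmod 6$, that is, four copies of the odd positive multiples of $3$. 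So $S$ consists of two copies of every odd positive integer together with two more copies of the odd positive multiples of $3$, as in the statement. Likewise for $T$: the two indices with $B_i=2$ give two copies of the even positive integers not divisible by $3$, and the two indices with $B_i=0$ give two copies apiece of the positive multiples of $6$, that is, four copies of the even positive multiples of $3$; so $T$ is two copies of every even positive integer together with two more copies of the even positive multiples of $3$. Finally, in Theorem \ref{main} the partitions counted by $D_S$ are required to have an \emph{odd} number of parts (since no $A_i$ vanishes), whereas those counted by $D_T$ are unrestricted (since some $B_i$ vanishes); with these conventions $D_S(N)=2\,D_T(N-1)$ is exactly the asserted identity. (For even $N$ both sides vanish, so the substance lies in the case $N$ odd, where the odd-length condition on $D_S$ holds automatically.)

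The only place where real work hides is Lemma \ref{3.2}, which I am taking as given: one must verify that the substitution of Lemma \ref{5.2} --- namely $d_1=2s+1-k-l+n$, $d_2=k-n$, $d_3=l-n$, $d_4=n$, together with the companion tuple on the $e_i$ in which $e_4=-s$ and $\alpha_i=\mu_i$ --- is a parity-preserving bijection of $\mathbb{Z}^4$ that carries the left-hand side of (\ref{t}) to its right-hand side, now with $C_i=6$, $(A_i)=(1,1,3,3)$, $(B_i)=(0,0,2,2)$, $m=1$. Granting this, I foresee no genuine obstacle in the proof of Theorem \ref{3.222} itself: the only points needing care are the ``two signs'' doubling of the residue classes $0$ and $C_i/2$ when translating the abstract sets $S,T$ of Theorem \ref{main} into the concrete sets of the statement, and the correct matching of the length conditions and of the exponent $p$.
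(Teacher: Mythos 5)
Your proposal is correct and follows exactly the paper's route: the paper's entire proof of Theorem \ref{3.222} is ``straightforward from Theorem \ref{main} and Lemma \ref{3.2}'', and your write-up simply makes explicit the bookkeeping (computing $p=1$, unpacking the residue classes $\pm A_i$, $\pm B_i$ with the doubling at $0$ and $C_i/2$, and matching the parity conditions) that the authors leave implicit. Your identifications of $S$, $T$, and the exponent $p$ are all accurate.
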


\begin{proof}
Straightforward from Theorem \ref{main} and Lemma \ref{3.2}.
\end{proof}

Notice that the two equations we are going to deal with next, which  are equivalent, respectively, to the partition identities modulo 5  and 11 of the Schr\"oter, Russell and Ramanujan type, will be:

\small
\begin{align*}
&2|\mu_1|+2|\mu_2|+10|\mu_3|+10|\mu_4|+2{d_1 \choose 2}+2{d_2 \choose 2}+10{d_3 \choose 2}+10{d_4 \choose 2}+d_1+d_2+5d_3+5d_4=\\
&2|\alpha_1|+2|\alpha_2|+10|\alpha_3|+10|\alpha_4|+2{e_1 \choose 2}+2{e_2 \choose 2}+10{e_3 \choose 2}+10{e_4 \choose 2}+0e_1+0e_2+0e_3+0e_4+3
\end{align*}
\normalsize
and

\small
$$2|\mu_1|+22|\mu_2|+2{d_1 \choose 2}+22{d_2 \choose 2}+d_1+11d_2=2|\alpha_1|+22|\alpha_2|+2{e_1 \choose 2}+22{e_2 \choose 2}+0e_1+0e_2+3.$$
\normalsize

Therefore, one moment's thought gives that the type of argument that held for Lemmas \ref{5.2} and \ref{3.2}, where  the bijection between the solutions could simply be taken to be the identity on all the partitions $\mu_i$, will not apply here, where $m=3$. For instance, in the first of the two equations, the tuple
$$(\mu_1,\dots,\mu_4;d_1,\dots,d_4)=(\emptyset,\emptyset,\emptyset,(1);1,0,0,0),$$
which makes the left-hand side equal 11, must  be mapped to a tuple $(\alpha_1,\dots,\alpha_{4};e_1, \dots, e_{4})$ such that the right-hand side equals 11, so we clearly need to have the partition $\alpha_4=\emptyset$. An entirely similar argument  holds for the second equation. This is why the two corresponding  partition results will be far more difficult to treat bijectively than the previous ones.

The following lemma is a classical application of Euler's Pentagonal Number Theorem:

\begin{lemma}\label{lemma0}
For any $n>0$, 
$$\sum_{i\in \mathbb Z} (-1)^ip\left(n-\frac{i(3i-1)}{2}\right)=0.$$
\end{lemma}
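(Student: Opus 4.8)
\textbf{Proof plan for Lemma \ref{lemma0}.}

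The plan is to recognize the sum as the coefficient of $q^n$ in the product of two generating functions that are well known to be reciprocal. Recall Euler's Pentagonal Number Theorem, which states that
$$\prod_{j\geq 1}(1-q^j)=\sum_{i\in\mathbb Z}(-1)^i q^{i(3i-1)/2};$$
the exponents $i(3i-1)/2$ running over $i\in\mathbb Z$ are precisely the generalized pentagonal numbers. On the other hand, the generating function for ordinary partitions is the reciprocal of the same product, namely $\sum_{n\geq 0}p(n)q^n=\prod_{j\geq 1}(1-q^j)^{-1}$, with the conventions $p(0)=1$ and $p(a)=0$ for $a<0$ already fixed in the text.

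First I would form the product of these two series. Multiplying the pentagonal-number series by the partition series gives
$$\left(\sum_{i\in\mathbb Z}(-1)^i q^{i(3i-1)/2}\right)\left(\sum_{m\geq 0}p(m)q^m\right)=\prod_{j\geq 1}(1-q^j)\cdot\prod_{j\geq 1}(1-q^j)^{-1}=1.$$
Next I would extract the coefficient of $q^n$ from both sides. On the left-hand side, collecting all contributions with total exponent $n$ yields exactly $\sum_{i\in\mathbb Z}(-1)^i p\!\left(n-i(3i-1)/2\right)$, where terms with $n-i(3i-1)/2<0$ vanish because $p$ is zero on negative integers, so the sum is genuinely finite. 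On the right-hand side, the coefficient of $q^n$ is $0$ for every $n>0$ (it is $1$ only when $n=0$). Equating the two gives the claimed identity for all $n>0$.

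There is essentially no obstacle here: the only point requiring a word of care is the legitimacy of multiplying the two formal power series and comparing coefficients term by term, which is immediate since both are elements of $\mathbb Z[[q]]$ and, for each fixed $n$, only finitely many index pairs $(i,m)$ contribute to the coefficient of $q^n$ (again because $p(a)=0$ for $a<0$ forces $i$ to range over a finite set). One could alternatively phrase the same argument by noting that multiplying the recursion $\sum_{n}p(n)q^n=\prod(1-q^j)^{-1}$ through by $\prod(1-q^j)$ is exactly Euler's classical recurrence for $p(n)$, of which the stated identity is the homogeneous ($n>0$) form; but the generating-function derivation above is the cleanest route and is what I would write out.
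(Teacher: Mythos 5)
Your argument is correct and complete: it is the classical generating-function proof, multiplying Euler's pentagonal number series by $\prod_{j\ge 1}(1-q^j)^{-1}=\sum_{n\ge 0}p(n)q^n$ and comparing coefficients of $q^n$ for $n>0$, with the convergence issue properly dispatched by $p(a)=0$ for $a<0$. The paper itself does not write out a proof but simply cites the standard references (Pak, formula 5.1.2, and Stanley, equation (1.91)), and your derivation is exactly the argument those sources give.
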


\begin{proof}
See e.g. \cite{Pak}, formula 5.1.2, or \cite{St0}, equation (1.91).
\end{proof}

\begin{lemma}\label{lemma1}
Fix  arbitrary $C_1,\dots,C_t,A_1,\dots,A_t,B_1,\dots,B_t$, such that $0\le A_i\le {C_i/2}$ and $0\le B_i\le C_i/2$, for all $i=1,\dots,t$. Let $S_N$ be the set of all tuples of $t$ partitions and $t$ integers $(\mu_1,\dots,\mu_{t};d_1,\dots,d_{t})$ such that $\sum_{i=1}^{t} d_i$ is odd and

\[\sum_{i=1}^tC_i|\mu_i|+\sum_{i=1}^tC_i{{d_i}\choose{2}}+\sum_{i=1}^t A_id_i =N.\]
Similarly, let $T_N$ be the set of all tuples of $t$ partitions and $t$ integers $(\alpha_1,\dots,\alpha_{t};e_1,\dots,e_{t})$ such that $\sum_{i=1}^{t} e_i$ is odd and

\[\sum_{i=1}^{t}C_i|\alpha_i|+\sum_{i=1}^{t}C_i{{e_i}\choose{2}} +\sum_{i=1}^{t} B_ie_i +m=N,\] 
where $m$ is an integer chosen so that the smallest value of $N$ for which $T_N\ne \emptyset$ is also the second smallest value of $N$ for which $S_N\ne \emptyset$. Define $k$ to be the smallest value such that $S_k\ne \emptyset$.  Further, let $U_N$ be the union of the set of all tuples of $t$ integers $(d_1,\dots,d_{t})$ such that $\sum_{i=1}^{t} d_i$ is odd and

\[\sum_{i=1}^{t}C_i{{d_i}\choose{2}}+\sum_{i=1}^{t} A_id_i =N,\]
with $|S_k|$ copies of the set of all tuples of $t$ integers $(f_1,\dots,f_{t})$ such that $\sum_{i=1}^{t} f_i$ is odd and 

\[\sum_{i=1}^{t}C_i \frac{f_i(3f_i-1)}{2}+k=N.\] 
Finally, let $V_N$ be the union of the  set of all tuples of $t$ integers $(e_1,\dots,e_{t})$ such that $\sum_{i=1}^{t} e_i$ is odd and

\[\sum_{i=1}^{t}C_i{{e_i}\choose{2}}+\sum_{i=1}^{t} B_ie_i+m =N,\]
with $|S_k|$ copies of the set of all tuples of $t$ integers $(f_1,\dots,f_{t})$ such that $\sum_{i=1}^{t} f_i$ is even and 

\[\sum_{i=1}^{t} C_i\frac{f_i(3f_i-1)}{2}+k=N.\] 
Then $|S_N|=|T_N|$ for all $N>k$ if and only if $|U_N|=|V_N|$ for all $N$.
\end{lemma}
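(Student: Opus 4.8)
The plan is to recast the lemma as an identity in the ring of formal power series $\mathbb{Z}[[q]]$, where the ``only if/if'' becomes essentially tautological once one feeds in the Pentagonal Number Theorem; the latter enters precisely through Lemma~\ref{lemma0}.

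First I would record the generating functions of the six families of objects involved. Put
$$F(q)=\prod_{i=1}^{t}\prod_{j\ge 1}\bigl(1-q^{C_ij}\bigr)^{-1}=\prod_{i=1}^{t}\Bigl(\sum_{\mu\in P}q^{C_i|\mu|}\Bigr),$$
and introduce $u(q)=\sum q^{\sum_i C_i\binom{d_i}{2}+\sum_i A_id_i}$ and $v(q)=\sum q^{\sum_i C_i\binom{e_i}{2}+\sum_i B_ie_i}$, the sums running over $t$-tuples of integers with $\sum_i d_i$ (respectively $\sum_i e_i$) odd, together with
$$\omega_0(q)=\sum_{\sum_i f_i\ \mathrm{even}}q^{\sum_i C_i f_i(3f_i-1)/2},\qquad
  \omega_1(q)=\sum_{\sum_i f_i\ \mathrm{odd}}q^{\sum_i C_i f_i(3f_i-1)/2}.$$
These are genuine power series because $C_i\binom{d}{2}+A_id$ and $C_if(3f-1)/2$ are bounded below on $\mathbb{Z}$ and take each value finitely often (the latter being $C_i$ times a generalized pentagonal number). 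Summing over the partition parts $\mu_i$, resp.\ $\alpha_i$, factors off a copy of $F(q)$, so that
$$\sum_N|S_N|q^N=F(q)\,u(q),\qquad \sum_N|T_N|q^N=q^mF(q)\,v(q),$$
while directly from the definitions
$$\sum_N|U_N|q^N=u(q)+|S_k|\,q^k\omega_1(q),\qquad \sum_N|V_N|q^N=q^mv(q)+|S_k|\,q^k\omega_0(q).$$

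Second I would invoke Euler. Substituting $q\mapsto q^{C_i}$ into the power-series form of Lemma~\ref{lemma0}, namely $\prod_{j\ge 1}(1-q^j)=\sum_{f\in\mathbb{Z}}(-1)^fq^{f(3f-1)/2}$, gives $\prod_{j\ge 1}(1-q^{C_ij})=\sum_{f\in\mathbb{Z}}(-1)^fq^{C_if(3f-1)/2}$; multiplying these $t$ identities and grouping the resulting tuples $(f_1,\dots,f_t)$ by the parity of $\sum_i f_i$ yields the key relation
$$F(q)^{-1}=\omega_0(q)-\omega_1(q).$$

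Third I would take care of the low-order bookkeeping and assemble the equivalence. By definition $|S_N|=0$ for $N<k$ and $|S_k|\ge 1$; moreover $m$ is chosen so that the least $N$ with $T_N\ne\emptyset$ is the \emph{second} least $N$ with $S_N\ne\emptyset$ (such a value exists, as $S_N\ne\emptyset$ for infinitely many $N$, e.g.\ those arising from $d=(1,0,\dots,0)$), hence it is strictly larger than $k$, and therefore $|T_N|=0$ for every $N\le k$. Consequently $\sum_N(|S_N|-|T_N|)q^N=|S_k|q^k+\sum_{N>k}(|S_N|-|T_N|)q^N$, so the assertion ``$|S_N|=|T_N|$ for all $N>k$'' is equivalent to $\bigl(u(q)-q^mv(q)\bigr)F(q)=|S_k|q^k$. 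Since $F(q)$ has constant term $1$ and is thus a unit in $\mathbb{Z}[[q]]$, this is equivalent to $u(q)-q^mv(q)=|S_k|q^kF(q)^{-1}=|S_k|q^k\bigl(\omega_0(q)-\omega_1(q)\bigr)$, which rearranges to $u(q)+|S_k|q^k\omega_1(q)=q^mv(q)+|S_k|q^k\omega_0(q)$, i.e.\ $\sum_N|U_N|q^N=\sum_N|V_N|q^N$, i.e.\ ``$|U_N|=|V_N|$ for all $N$''. Chaining these equivalences proves the lemma.

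The only point requiring genuine care is the third step: one must be sure that $|T_N|=0$ throughout the \emph{whole} range $N\le k$, not merely for $N<k$, and this is exactly what the otherwise odd ``second smallest'' clause in the choice of $m$ is there to guarantee. Everything else is a routine coefficient comparison, with the Pentagonal Number Theorem doing all the substantive work.
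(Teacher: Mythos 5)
Your proof is correct, and every step checks out: the generating functions $\sum_N|S_N|q^N=F(q)u(q)$, $\sum_N|T_N|q^N=q^mF(q)v(q)$, $\sum_N|U_N|q^N=u(q)+|S_k|q^k\omega_1(q)$, $\sum_N|V_N|q^N=q^mv(q)+|S_k|q^k\omega_0(q)$ are right; the identity $F(q)^{-1}=\omega_0(q)-\omega_1(q)$ is the correct multi-variable form of Euler's theorem; and the bookkeeping observation that $|T_N|=0$ for all $N\le k$ (forced by the ``second smallest'' clause) is exactly the point that makes the constant $|S_k|q^k$ come out right. Your route is, however, a genuinely different presentation from the paper's. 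The paper stays entirely combinatorial: it introduces auxiliary sets $U_N^{\ast}$ and $V_N^{\ast}$ (pairs of a partition tuple with an element of $U_{N-x}$ or $V_{N-x}$, which are the set-theoretic counterparts of your products $F(q)\sum|U_N|q^N$ and $F(q)\sum|V_N|q^N$), proves $|U_N|=|V_N|\Leftrightarrow|U_N^{\ast}|=|V_N^{\ast}|$ by an inductive cancellation, and then reduces $|U_N^{\ast}|-|S_N|=|V_N^{\ast}|-|T_N|$ to the vanishing of $\prod_{i}\sum_{f_i}(-1)^{f_i}p(n_i-f_i(3f_i-1)/2)$, which is the coefficient-level form of your $F^{-1}=\omega_0-\omega_1$. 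So the two arguments have the same skeleton, but yours is shorter and cleaner for proving the purely numerical equivalence the lemma asserts, since inverting the unit $F(q)$ replaces the induction on $N$. What the paper's version buys in exchange is combinatorial explicitness: the Remark immediately following the lemma uses the concrete sets $U_N^{\ast}$, $V_N^{\ast}$ and a Garsia--Milne involution-principle argument to upgrade a bijection $U_N\to V_N$ into an actual bijection $S_N\to T_N$, which is essential to the paper's stated goal of giving bijective proofs of the Schr\"oter--Russell--Ramanujan identities; a pure power-series identity does not directly supply that. One tiny point worth making explicit in your write-up: the standing hypothesis $C_i\ge 1$ (in force throughout the paper) is what guarantees your series $u$, $v$, $\omega_0$, $\omega_1$ are well-defined elements of $\mathbb{Z}[[q]]$, since each exponent is a positive-definite-up-to-linear-term quadratic in the integer variables.
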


\begin{proof}
For every $N$, let $U_N^{\ast}$ be the set of all pairs consisting of a tuple of $t$ partitions $(\mu_1,\dots,\mu_{t})$ and an element of $U_{N-x}$, where $x=\sum_{i=1}^{t} C_i|\mu_i|$. Likewise,  let $V_N^{\ast}$ be the set of all pairs consisting of a tuple of $t$ partitions $(\alpha_1,\dots,\alpha_{t})$ and an element of $V_{N-x}$, where $x=\sum_{i=1}^{t} C_i|\alpha_i|$.

Obviously, if $|U_N|=|V_N|$ for all $N$, then $|U_N^{\ast}|=|V_N^{\ast}|$ for all $N$. Conversely, if $|U_N^{\ast}|=|V_N^{\ast}|$ and $|U_x|=|V_x|$ for all $x<N$, then all of the terms of $|U_N^{\ast}|$ and $|V_N^{\ast}|$ in which any of the partitions are nonempty cancel out, leaving $|U_N|=|V_N|$. 

Thus, $|U_N|=|V_N|$ for all $N$ if and only if $|U_N^{\ast}|=|V_N^{\ast}|$ for all $N$. 

So  we need only prove that  $|S_N|=|T_N|$ for all $N>k$ if and only if $|U_N^{\ast}|=|V_N^{\ast}|$ for all $N$. By definition, $|S_N|=|T_N|=0$ for all $N<k$, and $|T_k|=0$ as well. Hence, for any $N<k$, $|U_N^{\ast}|=|V_N^{\ast}|=0$, and it is easy to see that
$$|U_k^{\ast}|=|S_k|+0 =0+|S_k|\cdot 1=|V_k^{\ast}|.$$

Therefore, it suffices to show that $|U_N^{\ast}|-|S_N|=|V_N^{\ast}|-|T_N|$ for all $N>k$. This is equivalent to the existence of a bijection between the set of all $(\mu_1,\dots,\mu_{t};f_1,\dots,f_{t})$ such that $\sum_{i=1}^{t} f_i$ is odd and
$$\sum_{i=1}^{t} C_i|\mu_i|+\sum_{i=1}^{t} C_i f_i(3f_i-1)/2+k=N,$$
and the set of all $(\alpha_1,\dots,\alpha_{t};f_1,\dots,f_{t})$ such that $\sum_{i=1}^{t} f_i$ is even and
$$\sum_{i=1}^{t} C_i|\alpha_i|+\sum_{i=1}^{t} C_i f_i(3f_i-1)/2+k=N.$$

We can associate each element of either set with a tuple $(n_1,\dots,n_{t})$, where, for any index $i$,
$$n_i=|\mu_i|+f_i(3f_i-1)/2 \phantom{x} \text{ or } \phantom{x}  n_i=|\alpha_i|+{f_i(3f_i-1)/2},$$
as appropriate. For any given $(n_1,\dots,n_{t})$, it is easy to see that the difference between the number of elements of the second set associated with $(n_1,\dots,n_{t})$ and the number of elements of the first set associated with it, is
\[\prod_{i=1}^{t}\sum_{f_i\in \mathbb Z} (-1)^{f_i}p\left(n_i-\frac{f_i(3f_i-1)}{2}\right).\]

Thus, by Lemma \ref{lemma0},  unless $(n_1,\dots,n_{t})=(0,\dots,0)$, the  last displayed formula is $0$. But we have $ \sum_{i=1}^{t} C_i n_i=N-k$, which implies that $n_i=0$ for all $i$ if and only if $N=k$. This  proves the bijection between the two sets for any $N>k$, as desired.
\end{proof}

\begin{remark}
Notice that, given a bijection $f$ between $U_N$ and $V_N$, we can construct a bijection between $S_N$ and $T_N$ as follows. First, create a bijection $f^{\ast}$ between $U_N^{\ast}$ and $V_N^{\ast}$ by having $f^{\ast}$ leave their partition components unchanged and act as $f$ on their integer components. Also, let $g$ be a bijection between $U_N^{\ast}-S_N$ and $V_N^{\ast}-T_N$ (where these set differences are defined in the obvious way). Constructing a bijection between $S_N$ and $T_N$ is now a standard variation of the Garsia-Milne involution principle \cite{GM} (see e.g. \cite{St0}, formula (2.33)). For instance, construct a graph where every element of $U_N^{\ast}$ or $V_N^{\ast}$ is a vertex, and there is an edge between any pair of elements that are in correspondence through $f^{\ast}$ or $g$. Thus, each element of $S_N$ or $T_N$ has one edge in this graph, and each element of $U_N^{\ast}-S_N$ or $V_N^{\ast}-T_N$ has two, so every element of $S_N$ is the other endpoint of a path ending at an element of $T_N$, and vice-versa. Therefore, the desired bijection between $S_N$ and $T_N$ maps each element of $S_N$ or $T_N$ to the other end of its path.
\end{remark}

We are now ready for the bijective proofs of the two partition identities corresponding, respectively, to the modular equations of degree 5 and 11 of the Schr\"oter, Russell and Ramanujan type (see Theorems 4.2 and 6.2 of \cite{Be}).

\begin{lemma}\label{4.2}
Condition (i) of Theorem \ref{main} holds for $N_0=3$, $t=4$, $C_1=C_2=2$, $C_3=C_4=10$, $m=3$, and
$$(A_1,A_2,A_3,A_{4})=(1,1,5,5),{\ }{\ } (B_1,B_2,B_3,B_{4})=(0,0,0,0).$$
\end{lemma}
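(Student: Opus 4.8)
The statement to prove is that Condition (i) of Theorem \ref{main} holds for the stated data: $t=4$, $C_1=C_2=2$, $C_3=C_4=10$, $m=3$, $(A_1,\dots,A_4)=(1,1,5,5)$ and $(B_1,\dots,B_4)=(0,0,0,0)$, for all $N\ge N_0=3$. Condition (i) is the assertion that the number of tuples $(\mu_1,\dots,\mu_4;d_1,\dots,d_4)$ with $\mu_i\in P$, $d_i\in\Z$, $\sum d_i$ odd, solving the left-hand side of (\ref{t})$=N$, equals the number of tuples $(\alpha_1,\dots,\alpha_4;e_1,\dots,e_4)$ (same constraints, $\sum e_i$ odd) solving the right-hand side$=N$. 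The natural approach is to invoke Lemma \ref{lemma1}, which reduces this partition-level identity $|S_N|=|T_N|$ for $N>k$ to a purely \emph{integer-level} identity $|U_N|=|V_N|$ for all $N$, where the $U_N$ and $V_N$ count tuples of integers $(d_1,\dots,d_4)$ (resp. $(e_1,\dots,e_4)$) satisfying the ``partition-free'' versions of the equations together with $|S_k|$ copies of a pentagonal-number correction term. So the first step is to identify $k$ (the smallest $N$ with $S_N\ne\emptyset$) and $|S_k|$, and to check that the chosen $m=3$ indeed makes the smallest $N$ with $T_N\ne\emptyset$ equal to the \emph{second} smallest $N$ with $S_N\ne\emptyset$, as required in the hypothesis of Lemma \ref{lemma1}; with $A=(1,1,5,5)$ one computes the minimal left-side values ($d_4=1$ or $d_3=1$ gives $10\binom{1}{2}+5=5$; various $d_i=\pm1$ combinations give $1,2,\dots$), so I expect $k$ to be small (likely $k=1$, from a single $d_i=\pm1$ among the first two coordinates) and $|S_k|$ a small explicit constant.

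\textbf{The core computation.} Having reduced to $|U_N|=|V_N|$, the task becomes a concrete identity among generating functions (theta-like series). Writing everything out, $|U_N|$ is the coefficient of $q^N$ in
\[
\Bigl(\sum_{d_1,d_2,d_3,d_4\in\Z,\ \sum d_i \text{ odd}} q^{\,2\binom{d_1}{2}+2\binom{d_2}{2}+10\binom{d_3}{2}+10\binom{d_4}{2}+d_1+d_2+5d_3+5d_4}\Bigr)
+ |S_k|\, q^{k}\Bigl(\sum_{\sum f_i \text{ odd}} q^{\,\sum_i C_i f_i(3f_i-1)/2}\Bigr),
\]
and $|V_N|$ is the analogous coefficient with $B=(0,0,0,0)$, the shift $+m=+3$, and the $f_i$-sum taken over $\sum f_i$ \emph{even}. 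The plan is to complete the square in each $d_i$-summand: $2\binom{d_i}{2}+d_i = d_i^2$ for $i=1,2$, and $10\binom{d_3}{2}+5d_3 = 5d_3^2$ for $i=3,4$; on the $e_i$-side, $2\binom{e_i}{2} = e_i^2 - e_i$ and $10\binom{e_i}{2}=5e_i^2-5e_i$, so after the substitution $e_i\mapsto e_i$ one gets sums of the form $\sum q^{e_i^2-e_i}$ etc. One then separates the ``$\sum$ odd'' and ``$\sum$ even'' constraints via the standard $\tfrac12(1\pm(-1)^{\sum d_i})$ device, turning each into a product/difference of one-variable theta functions $\sum_{d} q^{d^2}$, $\sum_d (-1)^d q^{d^2}$, $\sum_d q^{5d^2}$, $\sum_d(-1)^d q^{5d^2}$ (and their shifted cousins). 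The pentagonal term brings in $\sum_f (-1)^f q^{C f(3f-1)/2}$, which by the Jacobi triple product is itself a theta quotient. The identity $|U_N|=|V_N|$ then becomes a finite theta-function identity in $q$, of exactly the shape underlying the degree-5 Schröter/Russell/Ramanujan modular equation; it should reduce, after the dust settles, either to a known modular equation or to an elementary manipulation of the triple product. Equivalently, and perhaps more cleanly, one may exhibit an explicit bijection between $U_N$ and $V_N$ directly, mimicking (but necessarily more intricate than) the linear substitution $(d_i)\mapsto(e_i)$ used in Lemma \ref{5.2}; since here $m=3$ and $B\ne A$, that substitution will be affine-linear in the $d_i$ and $f_i$ with a nontrivial ``swap'' between the genuine-$d$ tuples and the pentagonal tuples.

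\textbf{Remaining bookkeeping.} After the main identity is established for all $N$ at the $U/V$ level, Lemma \ref{lemma1} gives $|S_N|=|T_N|$ for all $N>k$. One must then separately check the finitely many values $N$ with $N_0\le N\le k$ (if any) by hand — but since I expect $k=1$ and $N_0=3$, there may be nothing to check, or only $N=1,2$ which are excluded by $N_0=3$ anyway; in any event this is a trivial finite verification. Finally, note that Lemma \ref{lemma1} as stated requires the specific relation between $m$ and $k$; since the Lemma \ref{lemma0} (Euler's Pentagonal Number Theorem) is applied coordinatewise to a $t=4$-fold product $\prod_{i=1}^4\sum_{f_i}(-1)^{f_i}p(n_i-\tfrac{f_i(3f_i-1)}{2})$, the only surviving term is $(n_1,n_2,n_3,n_4)=(0,0,0,0)$, which is automatic from the structure of Lemma \ref{lemma1} and needs no extra work here.

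\textbf{Main obstacle.} The hard part will be the core theta-function computation: after completing squares and sorting the parity constraints, one is left with a genuine theta identity that is not formal — it encodes the degree-5 modular equation of Russell/Ramanujan, which is classically nontrivial. I expect one must either cite Berndt's analytic identity (Theorem 4.2 of \cite{Be}) at the level of the $q$-series and then observe that it matches $|U_N|=|V_N|$, or do a somewhat delicate hands-on construction of the bijection $U_N\leftrightarrow V_N$ involving a case split on the parities/residues of the integer tuples and a pairing rule that shuttles certain configurations over to the pentagonal-number copies. Getting that pairing rule exactly right — so that it is a genuine bijection on the nose, respecting the $|S_k|$ multiplicity and the odd/even $\sum f_i$ dichotomy — is where the real content lies; everything upstream (reduction via Lemma \ref{lemma1}) and downstream (the finite check, Theorem \ref{main}) is routine.
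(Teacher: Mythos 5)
Your reduction step is exactly the paper's: invoke Lemma \ref{lemma1} with $k=1$ and $|S_k|=4$ (both of which you correctly anticipate), so that the lemma becomes the purely integer-level identity $|U_N|=|V_N|$. But that is where the proposal stops. Everything after that point in your write-up is a description of two possible strategies (``reduce to a known modular equation or an elementary manipulation of the triple product'', or ``a somewhat delicate hands-on construction of the bijection $U_N\leftrightarrow V_N$'') together with an acknowledgment that ``getting that pairing rule exactly right \dots is where the real content lies.'' That real content is precisely what is missing: the lemma is not established by the proposal. Citing Berndt's analytic Theorem 4.2 would make the argument non-bijective (and pointless relative to the paper's goal, since Theorem \ref{4.222} is what one is trying to reprove combinatorially), and the ``complete the square and sort parities'' route, if pursued analytically, lands you back at the nontrivial degree-5 modular equation with no indication of how to resolve it.

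For comparison, here is what the paper actually does, and why it is not a routine case split. After the substitution $d_i=1/2-e_i$, both the $d$-tuples and the $e$-tuples are viewed as points of $D=\{d\in\mathbb{Z}^4\cup(\mathbb{Z}+1/2)^4:\ d_1+d_2+d_3+d_4 \text{ odd}\}$, with the relevant quantity being $\|d\|^2=d_1^2+d_2^2+5d_3^2+5d_4^2$. One introduces the four pairwise orthogonal (for this form) vectors $V_1=(1,1,1,1)$, $V_2=(1,-1,1,-1)$, $V_3=(5,5,-1,-1)$, $V_4=(5,-5,-1,1)$, sets $M_i=\|V_i\|^2/12$, and checks that $d\cdot V_i$ is always an odd multiple of $M_i$. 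The norm-preserving reflections $r_i(d)=d-\frac{d\cdot V_i}{6M_i}V_i$ give an involution that swaps integer points with half-integer points whenever some $d\cdot V_i\equiv 0\pmod{3M_i}$, cancelling all such points. The survivors satisfy $d\cdot V_i\equiv\pm M_i\pmod{6M_i}$ for every $i$; writing $d\cdot V_i=M_i(6x_i+y_i)$ with $y_i=\pm1$ and $z=d-\sum_i\frac{x_i}{2}V_i$, one shows $z\in\{(\pm1,0,0,0),(0,\pm1,0,0)\}$ (this is where the multiplicity $|S_k|=4$ is accounted for), and the Pythagorean identity $\|d\|^2=1+\sum_iM_ix_i(3x_i+y_i)$ identifies $d$ with a pentagonal tuple $(f_i)=(-x_iy_i)$, with the parity of $\sum f_i$ matching the integer/half-integer type of $d$. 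None of this is forced by the setup; the choice of the $V_i$ and the verification that the residual points are exactly four is the heart of the proof, and your proposal contains no substitute for it.
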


\begin{proof}
It is easy to check that, in the notation of Lemma \ref{lemma1}, we have $k=1$ and $|S_k|=4$. Thus, by Lemma \ref{lemma1}, proving the lemma is equivalent to showing the existence of a bijection from the union of the set of all quadruples $(d_1,d_2,d_3,d_4)$ with $d_1+d_2+d_3+d_4$ odd and the set containing $4$ copies of each quadruple $(f_1,f_2,f_3,f_4)$ with $f_1+f_2+f_3+f_4$ odd, to the union of the set of all quadruples $(e_1,e_2,e_3,e_4)$ with $e_1+e_2+e_3+e_4$ odd and the set containing $4$ copies of each quadruple $(f_1',f_2',f_3',f_4')$ with $f_1'+f_2'+f_3'+f_4'$ even, such that,  for every pair of corresponding elements, 

$$2{d_1 \choose 2}+2{d_2 \choose 2}+10{d_3 \choose 2}+10{d_4 \choose 2}+d_1+d_2+5d_3+5d_4$$
$$\text{ or }{\ }{\ }  f_1(3f_1-1)+f_2(3f_2-1)+5f_3(3f_3-1)+5f_4(3f_4-1)+1$$
$$=2{e_1 \choose 2}+2{e_2 \choose 2}+10{e_3 \choose 2}+10{e_4 \choose 2}+0e_1+0e_2+0e_3+0e_4+3$$
$$  \text{ or }{\ }{\ }  f_1'(3f_1'-1)+f_2'(3f_2'-1)+5f_3'(3f_3'-1)+5f_4'(3f_4'-1)+1.$$


Notice that, by replacing $d_i$ with $1/2-e_i$ in the above $d$-formula, for $i=1,\dots,4$, we obtain the $e$-formula.  Thus, if we apply the map $d_i=1/2-e_i$, we can view the $d$-tuples and $e$-tuples as all being in the set $D=\{d\in \mathbb{Z}^4\cup(\mathbb{Z}+1/2)^4:d_1+d_2+d_3+d_4\in 2\mathbb{Z}+1\}$. (A tuple $(d_1,d_2,d_3,d_{4})\in D$ will in some sense be considered ``of negative type'' if the $d_i$ are half-integers, since it will come from the opposite side of the bijection as the tuples in which the $d_i$ are integers.)

Furthermore, if we define a dot product so that $$(d_1,d_2,d_3,d_4)\cdot(d_1',d_2',d_3',d_4')=d_1 d_1'+d_2 d_2'+5d_3 d_3'+5d_4 d_4',$$
then every point in this set corresponds to a quadruple with a value in the previous equation of its ``length'' squared. 

Now, let 
\[V_{1}=(1,1,1,1), V_{2}=(1,-1,1,-1), V_{3}=(5,5,-1,-1), V_{4}=(5,-5,-1,1).\]

It is easy to check that these vectors are pairwise orthogonal. For each $i=1, \dots, 4$, set $M_i= \|V_i \|^2/12$. 

Thus, $M_1=1$, $M_2=1$, $M_3=5$, and $M_4=5$. Also, for arbitrary $d=(d_1,d_2,d_3,d_4)$, we clearly have that $d\cdot V_1$, $d\cdot V_2$, $d\cdot V_3$, and $d\cdot V_4$ are all odd integers, and $d\cdot V_3$ and $d\cdot V_4$ are divisible by $5$. It follows that $d\cdot V_i$ is an odd multiple of $M_i$, for all $d$ and $i$. 

Now, for each $d\in D$ and $i$, let
\[r_i(d)=d-\frac{d\cdot V_i}{6M_i}V_i.\]

It is easy to check  that $\|r_i(d)\|=\|d\|$, for all $d$ and $i$. If $d\cdot V_i\equiv 0\pmod{3M_i}$ then $\frac{d\cdot V_i}{6M_i}$ is a half-integer, so $r_i(d)$ is a vector that corresponds to an $e$-quadruple if $d$ corresponds to a $d$-quadruple, and vice-versa. Hence, we can map every point in $D$ that has a dot product with at least one of the $V_i$ that is divisible by $3M_i$, to a point of the opposite type and the same value in the above $d$-formula, by sending it to $r_i(d)$, where $i$ is the smallest integer such that $d\cdot V_i\equiv 0\pmod{3M_i}$. 

Note that $r_i(d)\cdot V_i=-d\cdot V_i$. Also, $r_i(d)$ has the same dot product with $V_j$ as $d$ does, for all $j\ne i$, because of the orthogonality of the vectors. This implies that $r_i(r_i(d))\cdot V_j=d\cdot V_j$ for all $j$, and thus that  $r_i(r_i(d))=d$. Therefore, the above map is an involution. 

Hence, now we only need to consider the points in $D$ whose dot products with $V_i$ are not divisible by $3M_i$, for any $i$. Let $d\in D$ be any such point. For each $i$, let $x_i$ be the nearest integer to $\frac{d\cdot V_i}{6M_i}$, $y_i=\frac{d\cdot V_i-6M_ix_i}{M_i}$, and $z=d-\sum_{i=1}^{4} \frac{x_i}{2}V_i$. 

For any $i$, $d\cdot V_i\equiv \pm M_i\pmod{6M_i}$, so $y_i=\pm 1$. Thus, by the Pythagorean Theorem we obtain:
\[\|d\|^2=\sum_{i=1}^{4} \frac{(d\cdot V_i)^2}{\|V_i\|^2}=\sum_{i=1}^{4} \frac{(6M_ix_i+M_iy_i)^2}{12M_i}=1+\sum_{i=1}^{4} M_ix_i(3x_i+y_i).\]

We easily have that $z$ must be either a quadruple of integers or a quadruple of half-integers, and $z\cdot V_i=y_i M_i=\pm M_i$, for each $i$. It is a simple exercise to verify that the only  $z$ that fit these criteria are: $(1,0,0,0)$, $(-1,0,0,0)$, $(0,1,0,0)$, and $(0,-1,0,0)$. 

Therefore, we can choose a bijection between the $4$ possible values of $z$ and the $4$ copies of each tuple $(f_1,\dots,f_4)$, and then map $d$ to the copy of $(f_1, \dots,f_4)=(-x_1 y_1,\dots,-x_{4} y_{4})$ corresponding to $z$. 

It easily follows that
\[f_1(3f_1-1)+f_2(3f_2-1)+5f_3(3f_3-1)+5f_4(3f_4-1)+1=\|d\|^2.\]

Also, the $y_i$ are determined by $z$, and for any given choice of $z$, since $x_i=-y_if_i$, the only $d$ that maps to a given tuple $(f_1, \dots,f_4)$ is $z-\sum_{i=1}^{4} \frac{y_i f_i}{2} V_i.$

Furthermore, the entries of such $d$ are all half-integers if $\sum_{i=1}^{4} f_i$ is odd, and integers if it is even. Thus, this map always takes elements of $D$ corresponding to tuples of $d$'s to tuples of $f$'s with an even sum, and elements of $D$ corresponding to tuples of $e$'s to tuples of $f$'s with an odd sum. This completes the proof of the lemma.
\end{proof}

\begin{theorem}\label{4.222}
Let $S$ be the set containing 4 copies of the odd positive integers and 4 more copies of the odd positive multiples of 5, and $T$ the set containing 4  copies of the even positive integers and 4 more copies of the even positive multiples of 5. Then, for any $N\geq 3$,
$$D_S(N)=8D_T(N-3).$$
\end{theorem}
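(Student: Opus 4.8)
The plan is to deduce Theorem \ref{4.222} directly from Theorem \ref{main} applied with the parameters supplied by Lemma \ref{4.2}. First I would record those parameters: $t=4$, $C_1=C_2=2$, $C_3=C_4=10$, $m=3$, $(A_1,A_2,A_3,A_4)=(1,1,5,5)$, and $(B_1,B_2,B_3,B_4)=(0,0,0,0)$. Since Lemma \ref{4.2} asserts that condition (i) of Theorem \ref{main} holds for these data with $N_0=3$, the equivalence (i)$\iff$(ii) in Theorem \ref{main} immediately gives condition (ii), namely $D_S(N)=2^p\cdot D_T(N-m)$ for all $N\ge 3$. It therefore remains only to identify the sets $S$ and $T$ and the exponent $p$ with the objects named in the statement of Theorem \ref{4.222}.

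Next I would unwind the definitions of $S$, $T$, and $p$ from Theorem \ref{main}. The set $S$ contains one copy of all positive integers congruent to $\pm A_i \pmod{C_i}$ for each $i$. For $i=1,2$ this gives positive integers $\equiv \pm 1 \pmod 2$, i.e.\ the odd positive integers, contributing two copies; for $i=3,4$ this gives positive integers $\equiv \pm 5 \pmod{10}$, i.e.\ the odd positive multiples of $5$, contributing two more copies. Hence $S$ is exactly ``$4$ copies of the odd positive integers and $4$ more copies of the odd positive multiples of $5$'' once one observes that the odd multiples of $5$ already sit among the odd integers (so the wording ``$4$ copies of the odds and $4$ more of the odd multiples of $5$'' matches two copies from the $C_i=2$ factors plus two copies from the $C_i=10$ factors). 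Similarly $T$ consists of one copy each of the positive integers $\equiv \pm B_i \pmod{C_i}$: for $i=1,2$ these are the positive integers $\equiv 0\pmod 2$, the even positive integers (two copies), and for $i=3,4$ the positive integers $\equiv 0 \pmod{10}$, i.e.\ the even positive multiples of $5$ (two more copies). Finally $p=|\{B_i=0\}|-|\{A_i=0\}|=4-0=4$, so $2^p=16$ — wait, this must be reconciled with the claimed factor $8$; here I would invoke the parity/oddness conventions in the definitions of $D_S$ and $D_T$: since every $A_i>0$, partitions counted by $D_S$ are required to have an odd number of parts, whereas since some $B_i=0$ there is no such restriction on $T$, and the bookkeeping in Lemma \ref{N} (the $r=|\{A_i=0\}|-1$ and the replacement $d_i\mapsto 1-d_i$) halves the naive count on the $B$-side, yielding the net factor $2^{p-1}\cdot\,$... so I must be careful and recompute $p$ precisely as defined: $p=|\{B_i=0\}|-|\{A_i=0\}|=4-1$ under the convention $|X|=1$ when $X=\emptyset$, i.e.\ $|\{A_i=0\}|=1$ since that set is empty, giving $p=4-1=3$ and $2^p=8$. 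This is the step where the $8$ emerges.

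So the key steps, in order, are: (1) cite Lemma \ref{4.2} to get condition (i) of Theorem \ref{main} for the stated parameters with $N_0=3$; (2) apply Theorem \ref{main} to obtain condition (ii): $D_S(N)=2^p D_T(N-m)$ for all $N\ge 3$; (3) compute $m=3$, $p=|\{B_i=0\}|-|\{A_i=0\}|=4-1=3$, hence $2^p=8$; (4) unravel the residue conditions defining $S$ and $T$ to see that they coincide with the sets described in Theorem \ref{4.222}, remembering the ``odd number of parts'' convention applies to $D_S$ (all $A_i$ positive) but not to $D_T$. Combining, $D_S(N)=8\,D_T(N-3)$ for all $N\ge 3$, which is the assertion.

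The main obstacle — really the only subtle point — is the exact value of the exponent $p$, and with it the matching of multiplicities between the arithmetic sets $S,T$ and the ``$4$ copies'' phrasing of the theorem. One has to be careful that the convention $|X|=1$ for $X=\emptyset$ makes $|\{A_i=0\}|=1$ even though no $A_i$ vanishes, so that $p=4-1=3$ rather than $4$; and one has to check that the ``odd number of parts'' requirement built into $D_S$ (but absent from $D_T$) is precisely what the factor $2^p$ has already accounted for via Lemma \ref{N}. Everything else is a mechanical translation, and no further combinatorics beyond Theorem \ref{main} and Lemma \ref{4.2} is needed.
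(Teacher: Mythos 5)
Your proposal is correct and is exactly the paper's proof: Theorem \ref{4.222} is derived as an immediate consequence of Theorem \ref{main} and Lemma \ref{4.2}, and you correctly compute $m=3$ and $p=|\{B_i=0\}|-|\{A_i=0\}|=4-1=3$ (using the convention $|X|=1$ for $X=\emptyset$), giving the factor $2^3=8$. One small correction to your identification of $S$ and $T$: since here $A_i=C_i/2$ and $B_i=0$ for every $i$, the residue classes $+A_i$ and $-A_i$ coincide, but each index $i$ still contributes \emph{two} copies of that class (one labelled $+A_i$ and one labelled $-A_i$, exactly as in the proof of Lemma \ref{N}), so $i=1,2$ give $4$ copies of the odd positive integers and $i=3,4$ give $4$ more copies of the odd multiples of $5$; your count of ``two copies from the $C_i=2$ factors plus two from the $C_i=10$ factors'' would understate the multiplicities by a factor of two and would not match the set named in the theorem.
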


\begin{proof}
Straightforward from Theorem \ref{main} and Lemma \ref{4.2}.
\end{proof}

\begin{lemma}\label{6.2}
Condition (i) of Theorem \ref{main} holds for $N_0=3$, $t=2$, $C_1=2$, $C_2=22$, $m=3$, and
$$(A_1,A_2)=(1,11),{\ }{\ } (B_1,B_2)=(0,0).$$
\end{lemma}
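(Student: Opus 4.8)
The plan is to follow, almost line for line, the proof of Lemma~\ref{4.2}, the only structural change being that here $t=2$ rather than $4$. First I would apply Lemma~\ref{lemma1}. Using $2\binom{d}{2}+d=d^{2}$ and $22\binom{d}{2}+11d=11d^{2}$ for $d\in\mathbb{Z}$, one sees that the form controlling $S_{N}$ is $d_{1}^{2}+11d_{2}^{2}$, so that $k=1$ and $|S_{k}|=2$ (the two tuples $(\emptyset,\emptyset;\pm 1,0)$); a short check shows that the value $m=3$ in the statement is precisely the one required by Lemma~\ref{lemma1}, since the least $N$ with $T_{N}\neq\emptyset$ equals $m$ while the second least $N$ with $S_{N}\neq\emptyset$ equals $3$. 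Lemma~\ref{lemma1} then reduces the claim to constructing a value-preserving bijection between the union of the set of pairs $(d_{1},d_{2})$ with $d_{1}+d_{2}$ odd and $2$ copies of the set of pairs $(f_{1},f_{2})$ with $f_{1}+f_{2}$ odd, on one side, and the union of the set of pairs $(e_{1},e_{2})$ with $e_{1}+e_{2}$ odd and $2$ copies of the set of pairs $(f_{1}',f_{2}')$ with $f_{1}'+f_{2}'$ even, on the other; the common value attached to a pair is $d_{1}^{2}+11d_{2}^{2}$, respectively $f_{1}(3f_{1}-1)+11f_{2}(3f_{2}-1)+1$, according to its type.

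Next, as in Lemma~\ref{4.2}, the substitution $d_{i}=1/2-e_{i}$ ($i=1,2$) converts $d_{1}^{2}+11d_{2}^{2}$ into the $e$-form $e_{1}^{2}-e_{1}+11e_{2}^{2}-11e_{2}+3$, so all the $d$- and $e$-pairs may be regarded as points of the set $D=\{d\in\frac{1}{2}\mathbb{Z}^{2}:d_{1}-d_{2}\in 2\mathbb{Z}+1\}$ (equivalently, the integer pairs of odd coordinate sum together with the half-integer pairs of even coordinate sum), equipped with the dot product $(d_{1},d_{2})\cdot(d_{1}',d_{2}')=d_{1}d_{1}'+11d_{2}d_{2}'$, so that the common value of a pair is its squared length $\|d\|^{2}=d_{1}^{2}+11d_{2}^{2}$. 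The two orthogonal vectors to use are $V_{1}=(1,1)$ and $V_{2}=(11,-1)$, with $M_{i}=\|V_{i}\|^{2}/12$, hence $M_{1}=1$, $M_{2}=11$ and $M_{1}+M_{2}=12$. The key elementary fact is that $d\cdot V_{1}=d_{1}+11d_{2}$ and $d\cdot V_{2}=11(d_{1}-d_{2})$ are, for every $d\in D$, an odd multiple of $M_{1}$ and of $M_{2}$ respectively; this is where the defining parity of $D$ is used, the half-integer case requiring its own short check.

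From here the argument is essentially that of Lemma~\ref{4.2}. The reflections $r_{i}(d)=d-\frac{d\cdot V_{i}}{6M_{i}}V_{i}$ preserve $\|d\|$ and flip the integer/half-integer type of $d$ exactly when $d\cdot V_{i}\equiv 0\pmod{3M_{i}}$, so mapping $d$ to $r_{i}(d)$ with $i$ the least such index is an involution that pairs off every point of $D$ having some dot product divisible by $3M_{i}$, thereby matching certain $d$-pairs with $e$-pairs of equal value. For the remaining points I would let $x_{i}$ be the nearest integer to $\frac{d\cdot V_{i}}{6M_{i}}$, set $y_{i}=\frac{d\cdot V_{i}-6M_{i}x_{i}}{M_{i}}\in\{\pm 1\}$ and $z=d-\sum_{i}\frac{x_{i}}{2}V_{i}$; the Pythagorean theorem gives $\|d\|^{2}=1+\sum_{i}M_{i}x_{i}(3x_{i}+y_{i})$, while $z\cdot V_{i}=\pm M_{i}$ together with $z\in\frac{1}{2}\mathbb{Z}^{2}$ and $z_{1}-z_{2}\in\mathbb{Z}$ force $z\in\{(1,0),(-1,0)\}$, i.e. exactly $|S_{k}|=2$ possibilities. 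Choosing a bijection between these two values of $z$ and the two copies, one sends $d$ to the copy of $(f_{1},f_{2})=(-x_{1}y_{1},-x_{2}y_{2})$ indexed by $z$; then $f_{1}(3f_{1}-1)+11f_{2}(3f_{2}-1)+1=\|d\|^{2}$, and writing $d=z-\sum_{i}\frac{y_{i}f_{i}}{2}V_{i}$ and reading off the parity of the coordinate sum shows that the integer-type points go to $f$-pairs with even sum and the half-integer-type points to $f$-pairs with odd sum, which is exactly the matching demanded on the two sides. Together with the reflection involution this builds the bijection between $U_{N}$ and $V_{N}$, and Lemma~\ref{lemma1} then yields $|S_{N}|=|T_{N}|$ for all $N>1$, i.e. condition (i) of Theorem~\ref{main} with $N_{0}=3$.

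I expect the only genuine work to be the bookkeeping in the last paragraph: confirming that $y_{i}=\pm 1$, that the only admissible $z$ are $(1,0)$ and $(-1,0)$ so their number matches $|S_{k}|$, and --- the one feature genuinely different from the $t=4$ case --- that since $D$ is not homogeneous (integer pairs of odd coordinate sum versus half-integer pairs of even coordinate sum), the arithmetic relating the integer/half-integer type of $d$ to the parity of $f_{1}+f_{2}$ has to be redone from scratch. None of this is deep, but it is where a sign or parity error would most easily slip through unnoticed.
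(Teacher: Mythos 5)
Your proposal is correct, and its skeleton is the one the paper uses: reduce via Lemma \ref{lemma1} (with $k=1$, $|S_k|=2$, and the check that $m=3$ is forced), then view the $d$- and $e$-pairs as lattice points for the quadratic form $d_1^2+11 d_2^2$, cancel a sublattice by a value-preserving, type-flipping reflection, and parametrize what remains by two copies of the pentagonal-number tuples. Where you differ is in the implementation. The paper's own proof of this lemma does \emph{not} transplant the full orthogonal-vector apparatus of Lemma \ref{4.2}: it uses the substitution $d_1=1/2-e_1$, $d_2=e_2-1/2$ (so the unified set is still cut out by $d_1+d_2$ odd), a single explicit reflection $d\mapsto d-\tfrac{d_1+d_2}{6}(11,1)$ triggered when $d_1+d_2\equiv 0\pmod{3}$, and then a direct algebraic parametrization of the remaining points in which the two copies of each $(f_1,f_2)$ are indexed by the sign involution $(d_1,d_2)\mapsto(-d_1,-d_2)$. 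You instead keep $d_i=1/2-e_i$ for both coordinates (so your lattice is cut out by $d_1-d_2$ odd), take the orthogonal pair $V_1=(1,1)$, $V_2=(11,-1)$ with $M_1=1$, $M_2=11$, and index the two copies by the residual vector $z\in\{(1,0),(-1,0)\}$; note that with your vectors both divisibility conditions $d\cdot V_i\equiv 0\pmod{3M_i}$ collapse to the single condition $d_1-d_2\equiv 0\pmod 3$, which is harmless but means the resulting bijection is genuinely different from the paper's (e.g.\ $(1,2)$ is reflected in the paper's scheme but not in yours). I checked the points you flag as the real work --- $M_1+M_2=12$ so the Pythagorean identity produces the $+1$; $z\cdot V_1=y_1$, $z\cdot V_2=11y_2$ force $y_1=y_2$ and $z=(\pm 1,0)$; and $d=z-\sum_i\tfrac{y_if_i}{2}V_i$ is integral exactly when $f_1+f_2$ is even --- and they all go through, so your version buys uniformity with Lemma \ref{4.2} at the cost of being slightly heavier than the paper's two-variable computation.
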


\begin{proof}
It is easy to check that, in the notation of Lemma \ref{lemma1}, we have $k=1$ and $|S_k|=2$. Thus, proving the lemma is equivalent to proving the existence of a bijection from the union of the set of all pairs $(d_1,d_2)$ with $d_1+d_2$ odd and the set containing $2$ copies of each pair $(f_1,f_2)$ with $f_1+f_2$ odd, to the union of the set of all pairs $(e_1,e_2)$ with $e_1+e_2$ odd and the set containing $2$ copies of each pair $(f_1',f_2')$ with $f_1'+f_2'$ even, such that, for every pair of corresponding elements, 
\begin{align*}
2{d_1 \choose 2}+22{d_2 \choose 2}+d_1+11d_2{\ }{\ }\text{ or }{\ }{\ }f_1(3f_1-1)+11f_2(3f_2-1)+1\\
=2{e_1 \choose 2}+22{e_2 \choose 2}+0e_1+0e_2+3{\ }{\ }\text{ or }{\ }{\ }f_1'(3f_1'-1)+11f_2'(3f_2'-1)+1.
\end{align*}




Thus, similarly to what we did in Lemma \ref{4.2}, we can apply the map $d_1=1/2-e_1$, $d_2=e_2-1/2$, in order to view the $d$-pairs and $e$-pairs as both being in the same set $D=\{d\in \mathbb{Z}^2\cup(\mathbb{Z}+1/2)^2:d_1+d_2\in 2\mathbb{Z}+1\}$. Notice that, for any $(d_1,d_2)\in D$, $d_1+d_2$ is odd. 

If $d_1+d_2\equiv 0\pmod{3}$, then the map
$$d_1'=d_1-\frac{11(d_1+d_2)}{6}, {\ }{\ }{\ }{\ } d_2'=d_2-\frac{d_1+d_2}{6}$$
yields a pair having the same value as $(d_1,d_2)$, since $(d_1')^2+11(d_2')^2=d_1^2+11d_2^2$. Furthermore, it is easy to see that $(d_1',d_2')$ is an $e$-tuple (i.e., it has half-integer entries) if and only if $(d_1,d_2)$ is a $d$-tuple (i.e., it has integer entries). Thus, this map cancels out all such elements. 

If $d_1+d_2\not\equiv 0\pmod{3}$, then let $x$ be the closest integer to $(d_1+d_2)/6$, and let $d_1'=d_1-11x/2$ and $d_2'=d_2-x/2$.

We have $d_1'+d_2'=\pm 1$, so there must exist an integer $y$ such that $d_1'=y/2\pm 1$ and $d_2'=-y/2$. This means that $d_1=y/2+11x/2\pm1$ and $d_2=-y/2+x/2$. It easily follows that in this case $(d_1,d_2)\in D$ has a value of
$$d_1^2+11d_2^2=11x(3x\pm1)+y(3y\pm1)+1,$$
and therefore we can map $(d_1,d_2)$ to a copy of $(f_1=\mp y,f_2=\mp x)$. 

Finally, it is a standard task to verify that $(d_1,d_2)$ and $(-d_1,-d_2)$ get mapped to the same pair $(f_1,f_2)$, and that, for any $(d_1,d_2)$, $f_1+f_2$ is even if $(d_1,d_2)$ is a $d$-tuple and odd if it is an $e$-tuple. This completes the bijection and the proof of the lemma.
\end{proof}

\begin{theorem}\label{6.222}
Let $S$ be the set containing 2 copies of the odd positive integers and 2 more copies of the odd multiples of 11, and $T$ the set containing 2  copies of the even positive integers and 2 more copies of the even multiples of 11. Then, for any $N\geq 3$,
$$D_S(N)=2D_T(N-3).$$\end{theorem}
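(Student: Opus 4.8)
The plan is to derive Theorem \ref{6.222} formally from Theorem \ref{main} and Lemma \ref{6.2}, in exactly the same way that Theorems \ref{5.222}, \ref{3.222} and \ref{4.222} follow from their respective combinatorial lemmas. First I would instantiate Theorem \ref{main} with the data supplied by Lemma \ref{6.2}: $t=2$, $C_1=2$, $C_2=22$, $(A_1,A_2)=(1,11)$, $(B_1,B_2)=(0,0)$, $m=3$, $N_0=3$. Lemma \ref{6.2} asserts precisely that condition (i) of Theorem \ref{main} holds for these parameters, so Theorem \ref{main} yields condition (ii) for all $N\geq N_0=3$; that is, $D_S(N)=2^{p}\cdot D_T(N-m)$.

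It then remains to read off the three objects $S$, $T$ and $p$ attached to these parameters and check that they are the ones in the statement. For $S$: since $2A_1=2\equiv 0\pmod{C_1}$, the residues $A_1$ and $-A_1$ coincide modulo $C_1$, so the ``copy of the integers congruent to $A_1$'' and the ``copy of the integers congruent to $-A_1$'' are two copies of the same set, namely the odd positive integers; likewise $2A_2=22\equiv 0\pmod{C_2}$, so the $A_2$-copy and the $-A_2$-copy are two copies of the positive integers congruent to $11$ modulo $22$, i.e.\ of the odd multiples of $11$. Hence $S$ consists of $2$ copies of the odd positive integers together with $2$ further copies of the odd multiples of $11$, which is the set $S$ of the theorem. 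The identical computation with $B_1=B_2=0$ gives $T$ as $2$ copies of the even positive integers together with $2$ further copies of the multiples of $22$, i.e.\ of the even multiples of $11$, which is the set $T$ of the theorem. Finally $p=|\{B_i=0\}|-|\{A_i=0\}|=2-1=1$ (using the convention $|\emptyset|=1$, since no $A_i$ vanishes), so $2^{p}=2$ and condition (ii) becomes $D_S(N)=2\,D_T(N-3)$ for all $N\geq 3$, as claimed. One may also remark that, as every element of $S$ is odd, the ``odd number of parts'' requirement implicit in $D_S$ is automatically satisfied when $N$ is odd and forces both sides to be $0$ when $N$ is even, so the identity holds in the displayed form with no caveat.

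I do not expect any genuine obstacle here: all of the substantive work is already in place --- the reversible chain of bijections behind Lemma \ref{N} and Theorem \ref{main}, and the involution-plus-bijection argument of Lemma \ref{6.2} built on Euler's Pentagonal Number Theorem through Lemma \ref{lemma1}. The only point requiring care is the bookkeeping above: correctly accounting for the doubling of a residue class that occurs exactly when $A_i\equiv -A_i$ (respectively $B_i\equiv -B_i$) modulo $C_i$, and correctly evaluating $p$ and $m$. Accordingly I expect the actual proof to be no more than a line or two --- ``Straightforward from Theorem \ref{main} and Lemma \ref{6.2}'' --- optionally with the identification of $S$, $T$, $p$ and $m$ spelled out as above.
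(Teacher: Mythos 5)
Your proposal is correct and matches the paper exactly: the paper's proof is the one-liner ``Straightforward from Theorem \ref{main} and Lemma \ref{6.2},'' and your instantiation of the parameters, identification of $S$ and $T$ (with the $\pm A_i$ classes coinciding since $2A_i\equiv 0\pmod{C_i}$), and computation $p=2-1=1$ are all accurate. The extra bookkeeping you supply is just an expanded version of what the paper leaves implicit.
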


\begin{proof}
Straightforward from Theorem \ref{main} and Lemma \ref{6.2}.
\end{proof}

Finally, we  state as a conjecture the ``missing lemma'' of this paper, whose bijective proof eludes us. By Theorem \ref{main}, such a proof will imply a bijective proof also for the last of the five identities of the Schr\"oter, Russell and Ramanujan type (the one modulo 23, proved analytically in \cite{Be}, Theorem 7.2), and will therefore complete our unified combinatorial approach to the five identities. 

\begin{conjecture}\label{7.2}
Condition (i) of Theorem \ref{main} holds for $N_0=3$, $t=12$, $C_1=\dots=C_{12}=46$, $m=3$, and
$$(A_1,\dots,A_{12})=(1,3,5,7,9,11,13,15,17,19,21,23),$$$$(B_1,\dots,B_{12})=(0,2,4,6,8,10,12,14,16,18,20,22).$$
\end{conjecture}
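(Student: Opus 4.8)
The plan is to follow the strategy of Lemmas \ref{4.2} and \ref{6.2}: first use Lemma \ref{lemma1} to reduce to a purely combinatorial statement about tuples of integers, and then produce the bijection by decomposing the governing quadratic form along a system of mutually orthogonal vectors and running a reflection/involution argument. For the reduction, one records that with $t=12$, $C_i=46$, $A_i=2i-1$ and $B_i=2i-2$ the least $N$ with $S_N\neq\emptyset$ is $k=1$ (take all $\mu_i=\emptyset$, $d_1=1$ and the rest $0$), that $|S_k|=1$, and that the second least such $N$ is $3$, which is also the least $N$ with $T_N\neq\emptyset$; hence the hypothesis of Lemma \ref{lemma1} holds with $m=3$. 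Proving the conjecture is then reduced to exhibiting, for every $N$, a value-preserving bijection between $U_N$ — the $12$-tuples $(d_1,\dots,d_{12})$ with $\sum d_i$ odd and $46\sum\binom{d_i}{2}+\sum(2i-1)d_i=N$, together with one copy of the tuples $(f_1,\dots,f_{12})$ with $\sum f_i$ odd and $23\sum f_i(3f_i-1)+1=N$ — and the corresponding set $V_N$ built from the $e_i$ with $B_i=2i-2$, shift $m=3$, and one copy of the pentagonal tuples with $\sum f_i$ even.

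I would then move both families onto a common lattice. Since $46\binom{d}{2}+(2i-1)d=23d^2+(2i-24)d$ and $46\binom{e}{2}+(2i-2)e=23e^2+(2i-25)e$, the substitution $d_i=e_i-\tfrac{1}{46}$ turns $\sum_i(23d_i^2+(2i-24)d_i)$ into $\sum_i(23e_i^2+(2i-25)e_i)+3$; equivalently, putting $s_i=46d_i+2i-24$ identifies both families with a single set $D\subset\mathbb Z^{12}$, namely the union of two cosets of the rank-$12$ lattice $\{46k:k\in\mathbb Z^{12},\ \sum_j k_j\ \text{even}\}$ — one with all $s_i\equiv2i-24\pmod{46}$ (``$d$-type''), one with all $s_i\equiv2i-25\pmod{46}$ (``$e$-type''), the two also distinguished by the residue of $\sum s_i$ modulo $92$ — on which the shared value is $\tfrac{1}{92}\|s\|^2-22$, $\|\cdot\|$ the ordinary Euclidean norm. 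One also notes $23f(3f-1)=\tfrac{23}{12}\bigl((6f-1)^2-1\bigr)$, so the pentagonal tuples correspond to points all of whose coordinates are $\equiv-1\pmod6$.

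Next I would look for pairwise orthogonal vectors $V_1,\dots,V_{12}$ with $M_i:=\|V_i\|^2/12\in\mathbb Z$ and with $s\cdot V_i$ an odd multiple of $M_i$ for every $s\in D$; matching the pentagonal coefficients $C_i/2=23$ forces $M_i=46^2$ and hence $\|V_i\|^2=12\cdot46^2$. Granting such a system, the reflections $r_i(s)=s-\tfrac{s\cdot V_i}{6M_i}V_i$ are isometries sending a point of $D$ with $s\cdot V_i\equiv0\pmod{3M_i}$ to a point of $D$ of the opposite type; using $r_i$ for the least such $i$ gives a fixed-point-free, type-reversing involution, so those points cancel in pairs. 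For the rest, with $x_i$ the nearest integer to $\tfrac{s\cdot V_i}{6M_i}$, $y_i=\tfrac{s\cdot V_i-6M_ix_i}{M_i}\in\{\pm1\}$ and residual $z=s-\sum\tfrac{x_i}{2}V_i$, the identity $\|s\|^2=\sum M_ix_i(3x_i+y_i)+\tfrac{1}{12}\sum M_i$ falls out — this is where the ``$3$'' of the pentagonal numbers is produced — the residual $z$ is forced into a fixed set that should have exactly $|S_k|=1$ element, and the rule $s\mapsto(f_i)=(-x_iy_i)$ should give the desired value-preserving bijection onto the pentagonal tuples, the parity of $\sum f_i$ recording whether $s$ was of $d$- or of $e$-type. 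Lemma \ref{lemma1} then converts this into Condition (i) of Theorem \ref{main}, and the variant of the Garsia--Milne involution principle in the Remark following Lemma \ref{lemma1} upgrades it to the partition identity modulo $23$ itself.

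The main obstacle is precisely the construction of the vectors $V_i$ — in particular, forcing the reflections $r_i$ to carry $D$ into itself while reversing the two types, and checking that the residual set really is a single class. In the degree-$5$ and degree-$11$ cases one had $A_i=C_i/2$, so the left-hand side was already a genuine quadratic form, the unifying substitution was a reflection composed with a shift by $\tfrac12$, and the orthogonal system was essentially immediate; here, by contrast, the only substitution that unifies the two sides is a translation by $\tfrac{1}{46}(1,\dots,1)$, and an orthogonal system compatible with the resulting congruences modulo $46$ (which reduce, the mod-$4$ part being automatic, to a congruence modulo $23$ relating $\sum_j j\,(W_i)_j$ and $\sum_j (W_i)_j$ for $V_i=46W_i$) would have to reconcile this modulus $23$ with the ``$3$'' intrinsic to pentagonal numbers in a way that our method does not know how to achieve. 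It is this point at which the argument stalls, and why we can only conjecture the statement.
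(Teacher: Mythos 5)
This statement is the one result in the paper that has \emph{no} proof: the authors state it explicitly as an open conjecture (``whose bijective proof eludes us''), so there is nothing to compare your argument against on the paper's side. More importantly, your proposal is not a proof either, and you say as much in your final paragraph. What you do establish correctly is the preparatory reduction: the verification that $k=1$, $|S_k|=1$, and that the second smallest $N$ with $S_N\neq\emptyset$ is $3$ (so $m=3$ is the right shift for Lemma \ref{lemma1}) is sound, and your rescaling $s_i=46d_i+2i-24$ with value $\tfrac{1}{92}\|s\|^2-22$ is arithmetically consistent (the constant $\sum_i(2i-24)^2/92=22$ checks out, as does the requirement $\|V_i\|^2=12\cdot 46^2$). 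This is a faithful transcription of the machinery of Lemmas \ref{4.2} and \ref{6.2} to the modulus-$23$ setting.

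The genuine gap is the one you name yourself: the entire argument hinges on producing twelve pairwise orthogonal vectors $V_i$ such that (a) $s\cdot V_i$ is an odd multiple of $M_i$ for every $s$ in both cosets, (b) each reflection $r_i$ maps $D$ to itself while exchanging the $d$-type and $e$-type cosets, and (c) the residual set of admissible $z$ has exactly $|S_k|=1$ element. None of these is constructed, and (c) in particular is a delicate constraint: in Lemma \ref{4.2} the residual set had $4$ elements and in Lemma \ref{6.2} the construction paired $(d_1,d_2)$ with $(-d_1,-d_2)$ to account for $|S_k|=2$, whereas here a single residual class must survive, which rules out any construction symmetric under negation in the $s$-coordinates. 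The existence of an orthogonal frame with $\|W_i\|^2=12$ in $\mathbb{Z}^{12}$ is not itself the obstruction (a $12\times12$ Hadamard matrix gives one), but reconciling it with the congruences modulo $46$ is exactly where the paper also stops. So the proposal is an accurate diagnosis of why the statement remains conjectural, not a proof of it; as written it does not close the conjecture.
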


\noindent
\textbf{Corollary  to Conjecture \ref{7.2}.}
\emph{Let $S$ be the set containing one copy of the odd positive integers and one more copy of the odd positive multiples of 23, and $T$ the set containing one copy of the even positive integers and one more  copy of the even positive multiples of 23. Then, for any $N\geq 3$,
$$D_S(N)=D_T(N-3).$$}

\section*{Acknowledgements} This work, along with the subsequent paper \cite{CSFZ2}, is the result of the first author's MIT senior  thesis, done in Summer and Fall 2011 under the supervision of the second author, and funded by the Institute through two UROP grants. The second author warmly thanks Richard Stanley for his terrific hospitality  during the whole year, the MIT Math Department for partial financial support, and Dr. Gockenbach and the Michigan Tech Math Department, from which he was on partial leave, for extra Summer support. The two authors also wish to thank the anonymous referees for comments, and Abhinav Kumar, Joel Lewis, and Richard Stanley for helpful discussions related to the materials of this work. Finally, the second author wants to acknowledge to be quite a distant second: the first author has done the better part of this project.


\end{document}